\newtheorem{thm}{Theorem}[section]
\newtheorem{lem}[thm]{Lemma}
\newtheorem{prop}[thm]{Proposition}
\newtheorem{e-proposition}[thm]{Proposition}
\newtheorem{corollary}[thm]{Corollary}
\newtheorem{e-definition}[thm]{Definition\rm}
\newtheorem{rmk}{\it Remark\/}
\def\R {\mathbb{R}}
\def\cA {\mathcal{A}}
\def\cD {\mathcal{D}}
\def\cL {\mathcal{L}}
\def\cQ {\mathcal{Q}}
\def\cZ {\mathcal{Z}}
\def\eps {{\varepsilon}}
\def\indc {{\bf 1}}
\def\d {{\partial}}
\newcommand{\Ker}{\operatorname{Ker}}
\newcommand{\Kn}{\operatorname{Kn}}
\newcommand{\Ma}{\operatorname{Ma}}
\newcommand{\Rn}{\operatorname{Re}}
\def\og{\leavevmode\raise.3ex\hbox{$\scriptscriptstyle\langle\!\langle$~}}
\def\fg{\leavevmode\raise.3ex\hbox{~$\!\scriptscriptstyle\,\rangle\!\rangle$}}
\begin{document}
\centerline{Comptes Rendus Physique. Acad\'emie des Sciences. Paris}


\selectlanguage{english}
\title{A microscopic view on the Fourier law}


\selectlanguage{english}

\author{Thierry Bodineau}
\email{thierry.bodineau@polytechnique.edu}
\address{CMAP, Ecole polytechnique}

\author{Isabelle Gallagher}
\email{isabelle.gallagher@ens.fr}
\address{DMA, Ecole normale sup\'erieure}

\author{Laure Saint-Raymond}
\email{laure.saint-raymond@ens-lyon.fr}
\address{UMPA, Ecole normale sup\'erieure de Lyon}


\maketitle

\medskip

\begin{center}
{\small Received *****; accepted after revision +++++\\
Presented by £££££}
\end{center}

\begin{abstract}
\selectlanguage{english}
The  Fourier law of heat conduction describes  heat diffusion in macroscopic systems.
This physical law has been experimentally tested  for a large class of physical systems. 
A natural question is   to know whether it can be derived from the microscopic models using the fundamental laws of mechanics.
\vskip 0.5\baselineskip

\selectlanguage{french}
\noindent{\sc R\'esum\'e.\ } 
La loi de Fourier permet de d\'ecrire la diffusion de la chaleur dans des syst\`emes physiques \`a l'\'echelle
 macroscopique. Cette loi  est tr\`es bien v\'erifi\'ee exp\'erimentalement et
une question naturelle est   de  la justifier \`a partir de mod\`eles microscopiques en utilisant les principes fondamentaux de la m\'ecanique.
\end{abstract}


\selectlanguage{english}

\section{The Fourier law~: a phenomenological model?}

The {\it Fourier law} is a physical law
which relates  the local thermal flux $J$ to the small variations of temperature $\nabla T$  in a domain $\Omega$
\begin{equation}
\label{eq: local Fourier law}
J = - \kappa (T) \nabla T\, ,
\end{equation}
where $\kappa(T)$ is  a non negative~$3\times3$ matrix, referred to as the thermal conductivity of the material. This relation is empirical, but is satisfied approximately in most physical systems.
>From the Fourier law \eqref{eq: local Fourier law}, one can then deduce  a diffusion equation for the temperature 
$$c(T) \d_t T =  \nabla\cdot (\kappa (T) \nabla T)\,,$$
where $c(T) $ is the specific heat of the system. This is the famous {\it heat equation} for which Fourier developed a number of mathematical tools.

All along the nineteenth century similar laws have been discovered such as Ohm's law for electric currents or Fick's law for particle currents. One  challenge   in physics, also addressed to mathematicians by Hilbert in his sixth problem, is to understand these macroscopic phenomenological laws starting from the first principles, i.e. from microscopic models of interacting atoms.

At the microscopic scale, matter is made of elementary particles, the evolution of which is governed by the classical laws of mechanics. In this framework, the goal of statistical physics   to model heat conduction is twofold~: 
\begin{itemize}
\item to give a thermodynamic definition of macroscopic quantities (internal energy,  temperature, pressure...) as statistical parameters   of the microscopic system;
\item to deduce a rigorous derivation of the constitutive laws such as  the Fourier law.
\end{itemize}
The main difficulty is that   thermodynamic quantities as temperature are well defined at equilibrium, instead the Fourier law describes a non-equilibrium phenomenon. 
Thus one has to derive an appropriate notion of local equilibrium which allows to decouple the fast and slow modes of the system and to define a local version of the temperature.
Such property of the system is related to ergodicity and mixing phenomena. From the mathematical point of view, establishing such a property for purely deterministic evolutions is a 
mathematical challenge as pointed out in \cite{BLR}.

In this note, we try to investigate this question for  models of gases very close to equilibrium, namely first for perfect gases which correspond already to an idealization of  dilute gases (Section 2) and then for hard-sphere gases in the Boltzmann-Grad limit (Section 3).


\section{The case of perfect gases}

\subsection{Diffusive limits of the Boltzmann equation}$ $

Out of thermodynamic equilibrium, perfect monatomic gases in a smooth domain~$\Omega$ of the $d$ dimensional  space~$\R^d$ can be described by their distribution~$f \equiv f(t,x,v)$ which is  the probability of finding a particle with position $x\in \Omega$ and velocity $v\in \R^d$ at time $t\in \R$.
The evolution of this probability density is governed by the Boltzmann equation \cite{boltzmann}~:
$$\d_t f+ v\cdot \nabla_x f = Q(f,f)$$
where the left-hand side expresses  the free transport of particles, while the right-hand side encodes the statistical effect of pointwise elastic collisions
$$ Q(f,f) (v) := \int_{ {\mathbb R}^d \times {\mathbb S}^{d-1}} \big (f(v'_*) f(v') - f(v_*)f(v)\big) b(v-v_*,\omega) dv_* d\omega\,.$$
The pre-collisional velocities $v', v'_*$ appearing in the ``gain term" are defined in terms of  the post-collisional velocities and of the deflection parameter $\omega\in  {\mathbb S}^{d-1}$~:
$$ v'= v- (v-v_*) \cdot \omega \; \omega \, , \quad v'_* = v_*+ (v-v_*) \cdot \omega \; \omega$$
so that the momentum and kinetic energy are preserved~: $v'+v'_* = v+v_*$ and  $|v'|^2+|v'_*|^2 = |v|^2+|v_*|^2$.

The collision cross-section $b$ gives the statistical law of this (velocity) jump process. The exchangeability of particles and the micro-reversibility of the collision process
 imply that  $b$ depends only on $|v-v_*|$ and on~$|(v-v_*)\cdot \omega|$. The hard-sphere interaction corresponds to $b(v-v_*, \omega) = \big( (v-v_*)\cdot \omega\big)_+$. 
The Boltzmann equation describes the typical behavior of a gas evolving according to Newtonian dynamics.
We shall discuss this \underline{``perfect gas" approximation}  in the next section.
 
In the case when the domain~$\Omega$ has boundaries, the Boltzmann equation must be supplemented with boundary conditions which will be specified below, see~(\ref{Req}).
 
\medskip
Thermodynamic variables, such as the temperature $T$ {\color{black} or the mass density $R$}, are defined as averages of $f$ with respect to the~$v$-variable. More precisely, 
$T$ is related to the variance of $f$ since it measures the 
thermal agitation, or equivalently  the dispersion  of particle velocities around the bulk velocity  
{\color{black} $$ \begin{aligned}
&  R(t,x) := \int f(t,x,v)  dv, \qquad RU(t,x) := \int  f(t,x,v) v dv \, , \\
 & T(t,x) := \frac{1}{R(t,x) } \left(\frac{1}{d} \int f(t,x,v) \big(v- U(t,x)\big)^2 dv\right)  \, .
 \end{aligned}$$
 }
 In general, we do not expect these variables to satisfy a closed system of equations. This will be the case only at  local thermodynamic equilibrium, i.e. in the limit when the collision process is much faster than the transport.
 The accuracy of this approximation is therefore measured by the Knudsen number $\Kn$, which is the ratio between the mean free path and the typical observation length.
 
 For perfect gases, as the elementary particles have zero volume, this Knudsen number is related to two other key non-dimensional parameters by the Von Karman relation
 $ \Kn = \Ma / \Rn$, where the Mach number~$\Ma$  measures the compressibility of the gas and is defined as the ratio of the bulk velocity to the thermal speed, and the Reynolds number $\Rn$  measures the kinematic viscosity of the gas.
 Diffusive limits are then obtained in regimes where both the Knudsen and Mach numbers are small, say of order $\alpha\ll1$. After a suitable non dimensionalisation   (and in particular rescaling time to~$t/\alpha$ in order to recover a diffusive regime), the Boltzmann equation can be restated
 \begin{equation}
 \label{Beq}
 \alpha \d_t f_\alpha + v\cdot \nabla_x f_\alpha = \frac1\alpha Q(f_\alpha,f_\alpha)\,.
 \end{equation}
 
 In order for these scaling assumptions to be consistent, the distribution $f_\alpha$ has to satisfy  that  the bulk velocity $U_\alpha $ is very small (of order $\alpha^\gamma $ with $\gamma \geq 1$) compared to the   thermal velocity $\sqrt{T_\alpha}$.  
 This is typically the case when $f_\alpha$ is a small  perturbation  (of order $O(\alpha^\gamma) $) around a thermodynamic equilibrium $M= M(v)$, i.e. a stationary solution to the Boltzmann equation $Q(M,M) = 0$. It could be also the case in more general situations, but  it is not known how to obtain an a priori control on the Mach number. For a perturbation  of the form
 $$
 f_\alpha = M(1+\alpha^\gamma g_\alpha)\, ,
 $$
the temperature is $T_\alpha = T+O(\alpha^\gamma)$ and we shall be interested in describing the evolution of the temperature   in this diffusive regime. Without loss of generality, we can assume that $M$ is the unit 
centered Gaussian:~$\displaystyle 
 M(v):=\frac{1}{(2\pi)^{\frac d2}} e^{-|v|^2/2}.
 $
 
\medskip

Now let us prescribe boundary conditions to~(\ref{Beq}). 
In order to observe heat fluxes, we impose a diffuse reflection on the boundary, the temperature of which is fixed. 
As the Boltzmann equation~(\ref{Beq})  is a transport equation, only the incoming fluxes have to be prescribed: 
 denoting~$n(x)$ the outward unit normal vector   at~$x\in \partial\Omega$, we define
$\displaystyle\Sigma_\pm := \big\{ (x,v) \in \d \Omega \times {\mathbb R}^3 \,/\, \pm v\cdot n(x)  >0\big\}$
and boundary conditions have to be prescribed on~$\Sigma_-$:  we impose
\begin{equation}
\label{Req}
\begin{aligned}
 f_\alpha (t,x, v) &=  M_{\Sigma}(v) \int_{ w \cdot n(x)>0} f_\alpha (t,x,w)   ( w \cdot n(x))_+ \,  d  w \, \,  \hbox{ on }  \, \, \Sigma_-\,,
  \end{aligned}
  \end{equation}
  where~$M_{\Sigma}(v) $ is a fixed isotropic function such that
$$
\int M_{\Sigma}(v)  (v  \cdot n(x))_+  dv 
= 1 \quad \mbox{and} \quad \int M_{\Sigma}(v) |v|^2 (v  \cdot n(x))_+  dv = (d+1) T\, .
$$
Thus the Maxwellian~$M_{\Sigma}(v)$ imposing the temperature $T$ at the boundary has the form
\begin{equation}
\label{eq: maxwell bord}
M_{\Sigma} (v):=\frac{1}{(2\pi T)^{\frac d2}} \sqrt{\frac{2\pi }{T}} \exp \left( - \frac{|v|^2}{2 T} \right).
\end{equation}
Note that this ensures in particular that there is no mass flux at the boundaries:
$$\int f_\alpha (t,x,v)  \, (v \cdot n(x) ) \, dv = 0 \, \, \hbox{ on } \, \,\partial \Omega\,.$$
In accordance with the scaling assumptions, the  shift of the temperature at the boundary has to be  of order~$O(\alpha^\gamma )$. 
{\color{black} Denoting by  $\bar \theta_{|\partial\Omega}$  the small variations, at the scale $\alpha^\gamma$, of the 
 temperature profile on the boundary $\partial\Omega$ of the domain, we set}
$T_{|\partial\Omega}= 1+\alpha^\gamma  \bar \theta_{|\partial\Omega}$. 
Linearizing \eqref{eq: maxwell bord} for small $\alpha$, we set
$$
 M_{\Sigma}(v) :=\sqrt{2\pi} M(v) \Big(1+\alpha^\gamma  \bar \theta_{|\partial\Omega} \frac{|v|^2- (d+1)}{2}\Big)\, ,
$$
which is just the linearisation of \eqref{eq: maxwell bord}.
Notice that
\begin{equation}
\label{eq: gaussian moments}
 \int M(v)  \frac{|v|^2- (d+1)}{2} (v  \cdot n(x))_+  dv 
 =   \int M(v) \frac{|v|^2-(d+1)}{2} |v|^2 (v  \cdot n(x))_+  dv =0\, .
\end{equation}
We shall now study the limit of (\ref{Beq}), (\ref{Req}) as $\alpha \to 0$. Of course the parameter $\alpha$ is a physical quantity which is fixed, but we expect the limit to be a good approximation of the solution when $\alpha \ll1$. Moreover, we shall use a \underline{weak notion of convergence filtering out oscillations} on small (spatial/temporal) scales since they are not relevant for the macroscopic description.

\subsection{The heat equation as  scaling limit of the local conservation of energy}


Let us first explain the formal asymptotics. 
The variation of temperature at the boundary modifies the density in the bulk as~$f_\alpha= M(1+\alpha^\gamma g_\alpha)$. Plugging this Ansatz  in (\ref{Beq}), we get
\begin{equation}
\label{Beq2}
\alpha \d_t g_\alpha + v\cdot \nabla_x g_\alpha = - \frac1\alpha \cL g_\alpha +\alpha^{\gamma-1} Q (g_\alpha,g_\alpha)\,,
\end{equation}
where 
$$ \cL h (v) := \int \big( h(v_*)+h(v) - h(v'_*) - h(v') \big)M(v_*)  b(v-v_*,\omega) dv_* d\omega\,.$$

\medskip
\noindent
\emph{Step 1.} 
Provided that one can obtain a uniform a priori bound on $g_\alpha$ (in a suitable functional space 
such that all the terms are well defined), we deduce that any limit point $\bar g$ of the sequence $(g_\alpha)$ satisfies
$\cL \bar g = 0$.
Using the symmetries of~$\cL$ inherited from the exchangeability of particles $(v,v_*) \mapsto (v_*, v)$ and from the micro-reversibility of collisions $(v,v_*, \omega) \mapsto (v',v'_*,\omega)$, we obtain that $\cL h = 0 $ if and only if 
$$ \int Mh \cL h  dv= \frac14 \iint \big( h(v_*)+h(v) - h(v'_*) - h(v') \big)^2 M(v) M(v_*)  b(v-v_*,\omega) dvdv_* d\omega= 0\,,$$
which implies that 
$ h(v)+h(v_*) - h(v')-h(v'_*) = 0$ for almost all $(v,v_*,\omega).$
One can then prove (using a suitable regularization and differentiating this relation with respect to $\omega$) that~$h$ is a linear combination of the collision invariants $1, v, |v|^2$ (cf. \cite{grad}). 
We conclude that $\bar g$ has to be an   infinitesimal Maxwellian 
\begin{equation}
\label{thermo-eq}
\bar g (t,x,v) =  \rho(t,x) + u(t,x) \cdot v + \theta(t,x) {|v|^2 - d \over 2} \, ,
\end{equation}
{\color{black} where $\rho,u,\theta$ stand for the density, momentum and energy profiles.}

\medskip
\noindent
\emph{Step 2.} The conservation laws associated with (\ref{Beq2}) state
$$ \alpha \d_t \int  g_\alpha \begin{pmatrix} 1\\v\\|v|^2\end{pmatrix} M dv +\nabla_x \cdot  \int    vg_\alpha \begin{pmatrix} 1\\v\\|v|^2\end{pmatrix} M dv = 0\,.$$
>From the conservation of mass, we deduce the incompressibility constraint $$\nabla_x  \cdot u  = \nabla \cdot \int v \bar g Mdv  = 0\, , $$ and from the conservation of momentum we get the Boussinesq relation
$$ \d_i  (\rho+\theta )  = \d_i  \int v_i^2 \bar g Mdv  =0\, .$$

\medskip
\noindent
\emph{Step 3.} For the next order in the expansion,  we use the crucial fact that $\cL$ is a self-adjoint Fredholm operator with kernel spanned by $1, v_i,|v|^2$. Since $\psi(v) := v \big( |v|^2- (d+2) \big)$ and 
$\phi(v) := v\otimes v - \frac{1}{d} |v|^2  \text{Id}$ belong to~$(\Ker \cL)^\perp$, we have
$$
\begin{aligned}
&\d_t \int Mg_\alpha v dv +\frac1\alpha \nabla_x \cdot \int \cL g_\alpha \cL^{-1} \phi M dv 
+\frac1{d \, \alpha} \nabla_x \int g_\alpha |v|^2 M dv = 0\, ,\\
&\d_t \int Mg_\alpha \big( |v|^2- (d+2) \big)  dv 
+\frac1\alpha \nabla_x \cdot \int \cL g_\alpha \cL^{-1} \psi M dv  = 0\,.
\end{aligned}
$$
Replacing $\alpha^{-1}\cL g_\alpha$ with (\ref{Beq2}), we obtain only terms of order $O(1)$ up to a gradient term.

- If $\gamma>1$, the nonlinear terms vanish in the limit, and we keep only the diffusion
$$ \d_t u -\nu \Delta _x u +\nabla_x p = 0\, , \quad \d_t \theta -\kappa \Delta _x \theta = 0\,,$$
where the second equation is exactly the Fourier law.

- If $\gamma = 1$,  using the identity $\cQ (\bar g,\bar g) = \frac12 \cL (\bar g^2)$ (which holds true for any  infinitesimal Maxwellian $\bar g$), one can show (see \cite{BGL1})  that the nonlinear terms correspond to the convection~:
$$ \d_t u +u \cdot \nabla_x u -\nu \Delta _x u +\nabla_x p = 0\, , \quad \d_t \theta+ u\cdot \nabla_x \theta  -\kappa \Delta _x \theta = 0\,.
$$

\medskip
\noindent
\emph{Step 4.} 
At leading order in $\alpha$, the boundary condition (\ref{Req}) states 
\begin{equation}
\label{eq: boundary terms g}
g_\alpha(t,x,v) 
=  \bar \theta_{|\partial\Omega} {|v|^2- (d+1) \over 2} +\int  g_\alpha (t,x,w)  \sqrt{2\pi} M(w)( w  \cdot n(x))_+   d w  \quad \hbox{ on }\quad  \Sigma_-\,.
\end{equation}
If the moments $\rho,u,\theta$ are smooth enough, then we can take the trace  of the constraint equation (\ref{thermo-eq}), and plug this Ansatz in the boundary condition~:
$$ 
\rho +u \cdot v +\theta {|v|^2 - d \over 2} 
=   \bar \theta_{|\partial\Omega}  {|v|^2-  (d+1)\over 2} + (\rho+ u\cdot n\sqrt{2\pi} + \frac12 \theta) \quad \hbox{ on }\quad  \Sigma_- \,, 
$$
from which we deduce that $u= 0$ and $\theta=  \bar \theta_{|\partial\Omega} $ on the boundary.

\medskip
\noindent
This formal analysis relies on many assumptions which need to be checked
\begin{itemize}
\item[(i)] the uniform a priori bounds on $(g_\alpha)$;
\item[(ii)] the kinetic equation (\ref{Beq2}) and conservation laws satisfied in some  suitable functional spaces;
\item[(iii)] the stability of the nonlinear (convection) terms;
\item[(iv)] the regularity of the moments up to the boundary.
\end{itemize}

\medskip
For the sake of simplicity, we shall assume from now on that the  non-equilibrium pertubation 
is very small ($\gamma>1$) so that the nonlinear effects (namely the convection terms) are higher order corrections. 
>From the mathematical point of view, we shall consider the linearized Boltzmann equation
 \begin{equation}
 \label{LBeq}
\alpha \d_t g_\alpha + v\cdot \nabla_x g_\alpha = -\frac1\alpha \cL g_\alpha 
\end{equation}
supplemented with the boundary conditions \eqref{eq: boundary terms g}
which will induce two major simplifications~: 
\begin{itemize}
\item First of all, the functional space defined by energy inequality  (which is actually the linearized version of the physical entropy inequality)
\begin{equation}
\label{Lentropy}
\begin{aligned}
\frac12& \int M g_\alpha^2(t,x,v) dxdv +\frac1{\alpha^2} \int_0^t \int Mg_\alpha \cL g_\alpha (s,x,v) dxdvds\\
 & \leq \frac12 \int M g_{\alpha,0}^2(t,x,v) dxdv -\frac1{2\alpha}  \int_0^t \int_{\d\Omega \times {\mathbb R}^3} M g_\alpha^2(t,x,v) (v\cdot n (x) ) d\sigma_x dv
  \end{aligned}
\end{equation}
is built on $L^2$ (with weights).
In the inequality above, $d\sigma_x$ is the surface measure on $\d \Omega$.
We therefore have a Hilbertian structure (and there is no need to renormalize the kinetic equation).
\item
Moreover, there are no nonlinear terms in the local conservations laws, which makes it easier to study the limit as $\alpha \to 0$. (Note that for the nonlinear Boltzmann equation, it is not even known whether the local conservation laws are satisfied for fixed $\alpha>0$.)
\end{itemize}
We shall therefore focus on points (i) and (iv), and  establish the following result.
\begin{thm}
Consider a perfect gas with Knudsen and Mach numbers of order $\alpha$
evolving according to \eqref{LBeq} with the boundary conditions \eqref{eq: boundary terms g} 
in a smooth domain~$\Omega$ at a given temperature $\bar\theta_{|\partial\Omega}$.
 Then, in the  diffusive limit  $\alpha \to 0$, the density fluctuation $g_\alpha $ converges weakly  to the infinitesimal Maxwellian
$$g(t,x,v):=  u (t,x) \cdot v + \frac{|v|^2 - (d+2)}2 \theta (t,x) \, , $$
where~$(u,\theta)$ satisfies the Stokes-Fourier equations
$$\begin{aligned}
&\partial_t u -\nu  \Delta_x u = - \nabla_x p \,  , \quad \nabla_x \cdot u = 0\, , \quad u_{|\d\Omega } = 0\, ,\\
&\partial_t \theta -\kappa  \Delta_x \theta = 0\, , \quad \theta_{|\d\Omega} =\bar\theta_{|\partial\Omega}\,. \\
\end{aligned}
$$
\end{thm}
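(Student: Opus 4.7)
The starting point is the linearized entropy inequality \eqref{Lentropy}, which provides a uniform bound on $g_\alpha$ in $L^\infty_t L^2_{x,v}(M\,dx\,dv)$, an $L^2$ bound of order $\alpha$ on the non-hydrodynamic part of $g_\alpha$ (via the spectral gap of $\cL$ on $(\ker\cL)^\perp$), and an $L^2$ control of the trace of $g_\alpha$ on the outgoing boundary $\Sigma_+$. Extracting a weakly convergent subsequence $g_\alpha \rightharpoonup \bar g$, the dissipation bound forces $\cL \bar g = 0$ in the limit; by the kernel characterization recalled in Step~1 of the formal analysis, $\bar g$ is necessarily an infinitesimal Maxwellian of the form \eqref{thermo-eq}. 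Plugging this into the limits of the mass and momentum conservation laws obtained by testing \eqref{LBeq} against $1$ and $v$ yields $\nabla_x \cdot u = 0$ and $\nabla_x(\rho+\theta)=0$, exactly as in Step~2.

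To close the system, I would follow the Bardos--Golse--Levermore strategy based on the Fredholm alternative: the momentum flux tensor $\phi(v) = v\otimes v - \frac{1}{d}|v|^2 \text{Id}$ and the heat flux vector $\psi(v) = v(|v|^2 - (d+2))$ both belong to $(\ker\cL)^\perp$, so $\cL^{-1}\phi$ and $\cL^{-1}\psi$ are well defined. Multiplying \eqref{LBeq} by these auxiliary functions, integrating in $v$ against $M$, and using $\alpha^{-1}\cL g_\alpha = -\partial_t g_\alpha - \alpha^{-1} v\cdot \nabla_x g_\alpha$ (recall $\gamma>1$ kills the quadratic term in the linearised setting), one rewrites the flux terms as divergences of $\bar g$-dependent integrals that reduce, in the limit, to $\nu \Delta_x u$ and $\kappa \Delta_x \theta$, with viscosity $\nu$ and conductivity $\kappa$ given by the standard quadratic forms on $\cL^{-1}\phi$ and $\cL^{-1}\psi$. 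Combined with incompressibility and Boussinesq, this delivers the Stokes--Fourier equations in the interior.

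The main obstacle — point (iv) in the excerpt — is the boundary analysis. To turn the kinetic boundary condition \eqref{eq: boundary terms g} into the Dirichlet conditions $u_{|\partial\Omega}=0$ and $\theta_{|\partial\Omega}=\bar\theta_{|\partial\Omega}$, I need the trace of the weak limit $\bar g$ to coincide with the weak limit of the traces of $g_\alpha$. I would obtain this by combining the $L^2(\Sigma_+)$ trace estimate furnished by \eqref{Lentropy} with a velocity averaging argument that provides strong compactness of the moments $\int g_\alpha \varphi(v)\,dv$ up to $\partial\Omega$; a standard cutoff away from the grazing set $\{v \cdot n(x)=0\}$ handles the singularity of the trace operator on that set, and the Hilbertian structure rules out concentration. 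Once trace convergence is established, I insert the Maxwellian Ansatz \eqref{thermo-eq} into the limiting boundary condition and test successively against $1$, $v\cdot n(x)$, the tangential part of $v$, and $|v|^2$; the orthogonality relations \eqref{eq: gaussian moments} decouple the moments and yield $u\cdot n=0$, $u-(u\cdot n)n=0$, and $\theta = \bar\theta_{|\partial\Omega}$ on $\partial\Omega$. Uniqueness of the Stokes--Fourier solution with these boundary data then upgrades subsequential to full weak convergence.
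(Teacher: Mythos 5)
The interior analysis you outline (coercivity of $\cL$, Fredholm inversion of the fluxes $\phi$ and $\psi$, passage to the limit in the conservation laws, weak traces plus velocity averaging for the boundary) is indeed the route the paper takes, following \cite{BGL1} and \cite{MSR}. But your very first step contains the gap that the entire first half of the paper's proof is designed to fill: you assert that the energy inequality \eqref{Lentropy} directly ``provides a uniform bound on $g_\alpha$ in $L^\infty_t L^2$'' together with the dissipation and trace controls. This is true only when $\bar\theta_{|\partial\Omega}=0$. The boundary term in \eqref{Lentropy} carries a prefactor $1/\alpha$, and the Darroz\`es--Guiraud computation \eqref{eq: variance bord} that turns it into a nonnegative variance uses the boundary condition \eqref{eq: boundary terms g} \emph{without} the source $\bar\theta_{|\partial\Omega}\frac{|v|^2-(d+1)}{2}$. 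For the inhomogeneous thermostat of the theorem, the outgoing trace of $g_\alpha$ differs from its $d\mu_x$-average by an $O(1)$ quantity, the variance structure is destroyed, and the boundary term is a signless $O(1/\alpha)$: no uniform bound, hence no weak compactness, hence no limit point $\bar g$ on which to run the rest of your argument. (The paper makes exactly this point, observing moreover that a bound of the kind you invoke would force $\theta_{|\partial\Omega}=0$, i.e.\ it is incompatible with the conclusion you are trying to reach.)

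The missing idea is the modulated entropy. One extends $\bar\theta_{|\partial\Omega}$ to a smooth function $\tilde\theta$ on $\Omega$ and measures the fluctuation relative to the tilted infinitesimal Maxwellian, i.e.\ works with $\tilde g_\alpha = g_\alpha - \tilde\theta(x)\frac{|v|^2-(d+2)}{2}$ as in \eqref{eq: changement mesure}. Thanks to the Gaussian moment identities \eqref{eq: gaussian moments}, the boundary condition for $\tilde g_\alpha$ becomes homogeneous (relation \eqref{eq: bord tilde}), so the Darroz\`es--Guiraud sign is restored; the price is a bulk source $\frac1\alpha\, v\cdot\nabla_x\tilde\theta\,\frac{|v|^2-(d+2)}{2}$ in the tilted equation \eqref{LBeq 2}, which is absorbed into the entropy dissipation by writing $v\big(|v|^2-(d+2)\big)=\cL\,\cL^{-1}\psi$ and applying Cauchy--Schwarz and Young, precisely because $\psi\in(\Ker\cL)^\perp$. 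This yields the uniform bound \eqref{MLentropy-bound}, after which your outline goes through --- with the caveat that the $L^2(\Sigma_+)$ trace control you rely on in the boundary step is available for $\tilde g_\alpha$, not for $g_\alpha$, so the Dirichlet conditions $u=0$ and $\theta=\bar\theta_{|\partial\Omega}$ must be read off from the tilted limit.
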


We stress the fact that more complete results have been established starting from the full Boltzmann equation.
Refined tools such as hypoellipticity, averaging lemmas and compensated compactness allow actually to study the limit of \eqref{Beq},\eqref{Req} as $\alpha \to 0$ for any $\gamma \geq 1$ in the framework of renormalized solutions, provided that $\bar\theta_{|\partial\Omega} = 0$ (see \cite{BGL1}, \cite{BGL2}, \cite{LM}, \cite{GSR}, \cite{MSR},  \cite{SR-LNM} and references therein). Extending these results for inhomogeneous boundary conditions, i.e. when $\bar\theta_{|\partial\Omega}\neq 0$, seems however to be very difficult since a uniform a priori bound (that would be the counterpart of \eqref{MLentropy-bound} in the nonlinear setting) is missing.

There are  also alternative approaches dealing with smooth  solutions to the Boltzmann equation, based on energy estimates in weighted Sobolev spaces (see \cite{BU}, \cite{LYY} in the time dependent case, and  \cite{ELM}, \cite{EGKM1}, \cite{EGKM2} in the stationary case). 
In particular, the case of non-homogenous boundary temperature has been addressed  in \cite{ELM2} (see also 
 \cite{ELM}) and a quantitative convergence  towards the compressible Navier-Stokes equation has been established in a stationary regime.

It is not clear  whether the sophisticated methods described above could be used at the level of particles.
Our result is  more modest as it holds only in a linear regime, however its derivation is much simpler and we hope that it can be helpful for future applications to particle systems.

\subsection{First step of the proof~: establishing uniform a priori bounds}
\label{sec: establishing uniform a priori bounds}

The first difficulty we encounter to justify the formal derivation of the previous paragraph  is that the energy inequality (\ref{Lentropy})  has a source term because of the boundary condition 
which seems to be of order $O(1/\alpha)$. It is then not clear that the sequence~$(g_\alpha)$ is uniformly bounded in $L^2$, and that we can extract converging subsequences.

When $\bar\theta_{|\partial\Omega} = 0$, this term (which is the linearized version of the Darroz\`es-Guiraud information \cite{DG})  has a sign.
To see this, let us first recall the definition of the boundary 
$$
\Sigma_\pm := \big\{ (x,v) \in \d \Omega \times {\mathbb R}^3 \,/\, \pm v\cdot n(x)  >0\big\}
$$
and denote by  $d\mu_x(v) :=\sqrt{2\pi} M (v\cdot n(x))_+ dv$ the probability measure on the boundary.
Decomposing $g_\alpha$ on~$\Sigma_+$ and $\Sigma_-$ (by using \eqref{eq: boundary terms g} with 
$\bar\theta_{|\partial\Omega} = 0$), we deduce that
\begin{align}
 \int_\Sigma g_\alpha^2(t,x,v) \sqrt{2\pi} M (v\cdot n(x)) dv d\sigma_x  
 & = \int_{\Sigma_+}  g_\alpha^2(t,x,v) d\mu_x(v) d\sigma_x  - \int_{\Sigma_+}  \Big( \int_{\Sigma_+}   g_\alpha d\mu_x(w) \Big)^2  d\mu_x(v) d\sigma_x
 \nonumber \\
 & = \int_{\Sigma_+}  \left( g_\alpha (t,x,v)    -   \int_{\Sigma_+}   g_\alpha d\mu_x(w) \right)^2  d\mu_x(v) d\sigma_x 
 \geq 0\, .
 \label{eq: variance bord}
\end{align}

Then, we obtain directly  a uniform  estimate on $(g_\alpha)$ in $L^\infty_t (L^2(dxMdv)) $, a control on the relaxation 
\cite{grad}
$$
\frac1{\alpha^2} \| g_\alpha - \Pi g_\alpha\|^2_{L^2 (dtdx a Mdv )} \leq \frac1{\alpha^2}  \int_0^t \int g_\alpha \cL g_\alpha Mdvdxds \leq C_0\,  ,
$$
where   $a$ is   the collision frequency:  $a(|v|) :=\displaystyle \int M(v_*)b(v-v_*,\omega) dv_* d\omega$,
and $\Pi$ stands for the projection onto~$\Ker \cL$.
We also have a control at the boundary
$$ 
\frac1\alpha \int_{\Sigma_+} \Big( g_\alpha - \int g_\alpha d\mu_x (v)\Big) ^2 d\mu_x(v) d\sigma_x ds\leq C_0 \,.
$$
Note that this last control implies more or less that $u_\alpha$ and~$\theta_\alpha$ have to vanish at the boundary (Dirichlet boundary condition) since $g_\alpha$ should look like its average with respect to $d\mu_x(v)$.
In particular, we cannot hope to prove a similar result in the case when $\bar \theta_{|\partial\Omega}\ne 0$. In other words, it is not a good idea to control the fluctuation $g_\alpha$ with respect to the Maxwellian $M$, since $M$ does not satisfy the right boundary conditions and therefore is not an (approximate) equilibrium for (\ref{LBeq})
with the boundary conditions \eqref{eq: boundary terms g}.

\medskip

The key idea here is then to introduce a \emph{modulated entropy} to measure the fluctuation with respect to a Maxwellian which has a suitable  behaviour at the boundary. More precisely, we extend the temperature to the whole domain~$\Omega$ by defining  a smooth function~$\tilde \theta$ such that 
$ \tilde  \theta_{|\partial\Omega} = \bar \theta_{|\partial\Omega} $
and the following modulated fluctuation
\begin{equation}
\label{eq: changement mesure}
\tilde g_\alpha (t,x,v):=  g_\alpha (t,x,v) - \tilde \theta(x) {|v|^2 - (d+2)\over 2} \,\cdotp
\end{equation}
A similar idea was used to obtain a priori estimates  in different contexts involving boundaries, namely for hydrodynamic limits of lattice systems   (see for example \cite{ELS}) or for singular perturbation problems as in \cite{DSR}. Note that this is  different from \cite{Y} or \cite{LYY} where the modulated entropy/energy is used directly to prove the convergence (which requires therefore to construct a precise approximate solution).
\begin{prop}
With the previous notation, the modulated entropy satisfies the uniform bound
\begin{equation}
\label{MLentropy-bound}
\begin{aligned}
 \int M \tilde g_\alpha^2(t,x,v) dxdv &+\frac1{\alpha^2} \int_0^t \int M\tilde g_\alpha \cL \tilde g_\alpha (s,x,v) dxdvds  \\&\quad +
 \frac{1}{\alpha   \sqrt{2 \pi}}\int_0^t  \int_{\Sigma_+} \Big( \tilde g_\alpha (s,x,v) - \int \tilde g_\alpha (s,x,w) d\mu_x (w)\Big) ^2  d\mu_x(v) d\sigma_x ds\leq C(t) \, .
  \end{aligned}
\end{equation}
\end{prop}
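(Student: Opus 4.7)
The plan is to derive a closed energy identity for $\tilde g_\alpha$ and show that the apparently singular boundary forcing and bulk source terms can all be tamed, either by a sign argument at the boundary or by the dissipation in the bulk.

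\medskip
\noindent \emph{Step 1 (Equation for $\tilde g_\alpha$).} Set $\psi(v):=\frac{|v|^2-(d+2)}{2}$. Since $1$ and $|v|^2$ lie in $\Ker \cL$, we have $\cL \psi=0$, hence $\cL\tilde g_\alpha=\cL g_\alpha$. As $\tilde\theta$ is time-independent, inserting \eqref{eq: changement mesure} into \eqref{LBeq} yields
\begin{equation*}
\alpha \d_t \tilde g_\alpha + v\cdot \nabla_x \tilde g_\alpha = -\frac1\alpha \cL \tilde g_\alpha - \psi(v)\, v\cdot \nabla_x\tilde\theta(x)\,.
\end{equation*}

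\medskip
\noindent \emph{Step 2 (Homogeneous boundary condition).} On $\Sigma_-$, using \eqref{eq: boundary terms g}, $\tilde\theta_{|\partial\Omega}=\bar\theta_{|\partial\Omega}$ and the identity $\frac{|v|^2-(d+1)}{2}-\psi(v)=\frac12$, one finds
\begin{equation*}
\tilde g_\alpha = \frac{\bar\theta_{|\partial\Omega}}{2} + \int \tilde g_\alpha\, d\mu_x + \bar\theta_{|\partial\Omega}\int \psi\, d\mu_x\,.
\end{equation*}
From \eqref{eq: gaussian moments} together with $\int M(v\cdot n)_+ dv = 1/\sqrt{2\pi}$, one computes $\int\psi\, d\mu_x=-\frac12$, so the $\bar\theta$-terms cancel and we are left with the homogeneous Maxwell reflection
\begin{equation*}
\tilde g_\alpha(t,x,v) = \int \tilde g_\alpha(t,x,w)\, d\mu_x(w)\qquad \text{on }\Sigma_-\,.
\end{equation*}

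\medskip
\noindent \emph{Step 3 (Energy identity).} Multiplying the equation of Step 1 by $M\tilde g_\alpha$, integrating over $\Omega\times\R^d$, and using the homogeneous reflection, the transport term yields a boundary contribution that, exactly as in \eqref{eq: variance bord}, equals
\begin{equation*}
\frac{1}{2\sqrt{2\pi}}\int_{\Sigma_+}\Bigl(\tilde g_\alpha - \int \tilde g_\alpha\, d\mu_x\Bigr)^2 d\mu_x\, d\sigma_x \geq 0\,,
\end{equation*}
by Jensen's inequality applied to the probability measure $d\mu_x$. Dividing by $\alpha$ and integrating in time gives
\begin{equation*}
\tfrac12\!\int M\tilde g_\alpha^2(t)\,dxdv + \tfrac{1}{\alpha^2}\!\int_0^t\!\!\int M\tilde g_\alpha \cL \tilde g_\alpha\, dxdvds + \tfrac{1}{2\alpha\sqrt{2\pi}}\!\int_0^t\!\!\int_{\Sigma_+}\!\bigl(\tilde g_\alpha-\textstyle\int\tilde g_\alpha d\mu_x\bigr)^2 d\mu_x d\sigma_x ds = \tfrac12\!\int M\tilde g_{\alpha,0}^2 - \tfrac{1}{\alpha}\!\int_0^t\!\!\int M\tilde g_\alpha\, \psi\, v\cdot \nabla_x\tilde\theta\, dxdvds\,.
\end{equation*}

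\medskip
\noindent \emph{Step 4 (Absorbing the $1/\alpha$ source term).} This is the main obstacle: the forcing on the right-hand side carries a seemingly singular $1/\alpha$ factor. The key observation is that $v\,\psi(v)$ is orthogonal to $\Ker\cL=\Span(1,v,|v|^2)$ in $L^2(M\,dv)$ (direct check using parity and Gaussian moments). Denoting by $\Pi$ the orthogonal projection onto $\Ker\cL$, only $(I-\Pi)\tilde g_\alpha$ contributes to the source, and by Cauchy--Schwarz together with the coercivity $\int M\tilde g_\alpha \cL\tilde g_\alpha\, dv \geq c\int Ma\,|(I-\Pi)\tilde g_\alpha|^2 dv$,
\begin{equation*}
\tfrac{1}{\alpha}\Bigl|\!\int M\tilde g_\alpha\, \psi\, v\cdot\nabla_x\tilde\theta\, dxdv\Bigr| \leq \tfrac{c}{2\alpha^2}\!\int Ma\,|(I-\Pi)\tilde g_\alpha|^2 dxdv + C\!\int \tfrac{M}{a}|v\,\psi|^2|\nabla_x\tilde\theta|^2 dxdv\,.
\end{equation*}
The first term is absorbed by half of the dissipation; the second is a finite constant depending on $\|\nabla_x\tilde\theta\|_{L^\infty}$ since $M|v|^6/a$ is integrable. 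Integrating in time produces a remainder of size $Ct$, which together with $\tfrac12\int M\tilde g_{\alpha,0}^2$ yields the bound \eqref{MLentropy-bound} with $C(t) = \tfrac12\|\tilde g_{\alpha,0}\|_{L^2(Mdxdv)}^2 + Ct$.
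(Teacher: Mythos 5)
Your proof is correct and follows essentially the same route as the paper's: the same modulation reducing the boundary condition to a homogeneous diffuse reflection (whose Darroz\`es--Guiraud contribution is the nonnegative variance on $\Sigma_+$), the same energy identity, and the same absorption of the $O(1/\alpha)$ bulk source into the entropy dissipation. The only (harmless) variation is in the last step: where you use the orthogonality of $v\,\tfrac{|v|^2-(d+2)}{2}$ to $\Ker\cL$ together with the coercivity $\int M h\,\cL h\,dv\ge c\,\|(I-\Pi)h\|^2_{L^2(Ma\,dv)}$, the paper writes the source as $\int\nabla_x\tilde\theta\cdot\cL^{-1}\psi\,\cL\tilde g_\alpha\,M\,dv$ and invokes the elementary bound $\|\cL h\|^2_{L^2(Ma^{-1}dv)}\le 4\int Mh\,\cL h\,dv$ --- two equivalent ways of exploiting that the source lies in $(\Ker\cL)^\perp$.
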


\begin{proof}
The modulated fluctuation \eqref{eq: changement mesure} has been chosen so that 
the system at the boundary can be compared to a local equilibrium as in \eqref{Req}. 
As a consequence, the boundary conditions \eqref{eq: boundary terms g}  can be rewritten   for any $x$  in $\Sigma_-$ as
\begin{align}
\tilde  g_\alpha (t,x, v) 
&=  \frac12   \bar \theta_{|\partial\Omega} +\int  g_\alpha (t,x,w) d\mu_x(w)  dw 
= \int \Big( g_\alpha  (t,x,v')- \tilde \theta(x) {|v'|^2  - (d+2) \over 2}\Big) d\mu_x(w)  dw 
\nonumber \\ 
&= \int  \tilde g_\alpha (t,x,w) d\mu_x(w)\, ,
\label{eq: bord tilde}
\end{align}
where we used the Gaussian moments  \eqref{eq: gaussian moments} at the boundary.
Adjusting the boundary conditions comes at a  price and the evolution equation \eqref{LBeq}
has to be replaced by 
\begin{equation}
\label{LBeq 2}
\alpha \d_t \tilde g_\alpha + v\cdot \nabla_x \tilde g_\alpha = -\frac1\alpha \cL \tilde g_\alpha 
+  v\cdot \nabla_x   \tilde \theta(x) \, {|v|^2 - (d+2)\over 2}\,,
\end{equation}
where we used  the local conservations of mass and energy  so that   
${\mathcal L}\tilde g_\alpha = {\mathcal L}  g_\alpha$.

The same computation as for the Darroz\`es-Guiraud inequality applies for $\tilde g_\alpha$ with an additional contribution in the bulk due to the modulated fluctuation
 \begin{equation}
\label{MLentropy}
\begin{aligned}
& \int M \tilde g_\alpha^2(t,x,v) dxdv 
+ \frac1{\alpha^2} \int_0^t \int M  \, \tilde g_\alpha \cL  \, \tilde g_\alpha (s,x,v) dxdvds
+  \frac1{\alpha}  \int_0^t \int_\Sigma M \tilde g_\alpha ^2 (s,x,v)\,  ( v\cdot n (x)) \,   d\sigma_x dvds \\
 &\qquad =   \int M \tilde g_{\alpha,0}^2(t,x,v) dxdv 
 + \frac1\alpha \int_0^t \int \nabla_x \tilde \theta(x) \cdot v 
 \frac{|v|^2 - (d+2)}{2} \, \tilde g_\alpha (s,x,v)  M(v)dvdx  ds \, .
 \end{aligned}
\end{equation}
%
The boundary term in (\ref{MLentropy-bound}) can been obtained  as in 
\eqref{eq: variance bord} thanks to the tilted  boundary conditions \eqref{eq: bord tilde} 
\begin{equation*}
\sqrt{2 \pi} \int_0^t \int_\Sigma M \tilde g_\alpha ^2  (s,x,v)\, ( v\cdot n (x)) \,  d\sigma_x dv ds 
=
\int_0^t  \int_{\Sigma_+} \Big( \tilde g_\alpha (s,x,v)  - \int \tilde g_\alpha (s,x,w)  d\mu_x (w)\Big) ^2 d\mu_x(v) d\sigma_x ds\, .
\end{equation*}
It then remains  to control the flux term. Using again $\psi(v) := v \big( |v|^2- (d+2) \big)$, we get 
\begin{align*}
&\Big| \frac1\alpha \int_0^t \int \nabla_x \tilde \theta \cdot v(|v|^2 - (d+2)) \, \tilde g_\alpha Mdvdx ds \Big| 
= \Big|\frac1\alpha \int_0^t \int \nabla_x \tilde \theta \cdot \cL^{-1} \psi \cL  \, \tilde  g_\alpha Mdvdx ds\Big|\\
&\qquad \qquad 
\leq C \sqrt{t}  \| \nabla \tilde \theta \|_{L^2(dx)}   \| \cL^{-1} \psi\| _{L^2 (M adv)} \left\|\frac1\alpha \cL \tilde  g_\alpha \right\|_{L^2 (dt dx Ma^{-1}dv)} \\
& \qquad \qquad 
\leq  Ct  \| \nabla \tilde \theta \|^2_{L^2(dx)}   \| \cL^{-1} \psi\| ^2_{L^2 (Ma dv)} 
+\frac1{2\alpha^2} \int_0^t \int M \, \tilde g_\alpha \cL  \tilde g_\alpha (s,x,v) dxdvds
\end{align*}
where in the last inequality we have used the fact that for any function~$h$,
$$
\begin{aligned}
\|  \cL h\|^2_{L^2 (Ma^{-1}dv)} & = \int  \Big( \int \big(h(v_*)+h(v) - h(v'_*) - h(v') 
\big) M(v_*) b dv_*d\omega
\Big)^2 M a^{-1}dv\\
& \leq  \int  \left( \int \big(h(v_*)+h(v) - h(v'_*) - h(v') 
\big)^2M(v_*) b dv_*d\omega
\right) \; \Big( \int M(v_*) b dv_*d\omega
\Big)  M a^{-1}dv\\
& =  \int \int \big(h(v_*)+h(v) - h(v'_*) - h(v') 
\big)^2 \, M \, M(v_*) \, b dv_*d\omega = 4 \int Mh \cL h dv \, .
\end{aligned}
$$
The result follows.
\end{proof}

\begin{rmk}
In the nonlinear case, the counterpart of the energy inequality \eqref{Lentropy} is the entropy inequality
$$
\begin{aligned}
\frac1{\alpha^{2\gamma}} &\int h\Big( {f_\alpha \over M} \Big) (t,x,v) Mdvdx+\frac1{\alpha^{2+2\gamma}} \int_0^t \int D(f_\alpha )(s,x) dsdx \\
&\leq \frac1{\alpha^{2\gamma}} \int h\Big( {f_{\alpha,0} \over M} \Big) (x,v)Mdvdx -\frac1{\alpha^{1+2\gamma}} \int_0^t \int_{\d\Omega \times {\mathbb R}^3} h\Big( {f_\alpha \over M} \Big)(s,x,v) M (v\cdot n (x) ) d\sigma_x dvds
\end{aligned}$$
where  $h(z) = z \log z - z+1$. In order to kill the singularity coming from the boundary term in the right-hand side, a natural idea is to modulate the entropy.  This strategy has been successfully used for stochastic dynamics, see for example \cite{ELS,KLO,FLM}.
Unfortunately, it is not known whether the local conservation of energy holds for the Boltzmann equation, and anyway there is no uniform control on the energy flux. An alternative would then be to introduce a renormalized modulated entropy, but it is not clear how to define it.
\end{rmk}

Once we have uniform bounds on $g_\alpha$ and $q_\alpha (v,v_*) := \displaystyle \frac1\alpha \big(g_\alpha(v) +g_{\alpha } (v_*) - g _\alpha(v')  - g _{\alpha  }(v'_*)\big )$, there is no particular difficulty to prove that any joint limit points $(\bar g,\bar q)$  satisfy
\begin{equation}
\label{kinetic-lim}
 \cL \bar g = 0 \, , \qquad v\cdot \nabla_x \bar g = - \int \bar q (v,v_*) M(v_*)  b(v-v_*, \omega) dv_*d\omega
 \end{equation}
in the sense of distributions. Then, taking limits in the conservations of mass, momentum and energy, we obtain that the moments of $\bar g$ satisfy the incompressibility and Boussinesq constraints as well as
$$ \d_t u -\nu \Delta _x u +\nabla_x p = 0\, ,  \quad \d_t \theta -\kappa \Delta _x \theta = 0\,.$$

\subsection{Second step of the proof~: analyzing the boundary conditions}

The second difficulty is to obtain the limiting boundary conditions.
>From the control on the boundary term in the modulated entropy inequality, we deduce easily that 
$$  \tilde g_\alpha - \int \tilde g_\alpha d\mu_x (v) \to 0\,  \hbox{ in }\,L^2 (dtd\mu_x(v)d\sigma_x )\,.$$
It remains then to prove that  \emph{the limit of the trace is the trace of the limit}, which will imply that 
$$\bar g  -\bar \theta_{|\d \Omega}  {|v|^2-  (d+2) \over 2} - \Big(   \rho- \frac12 \tilde \theta\Big)  = 0\,  \hbox{ on }\, \Sigma_+\,,$$
and consequently that 
$$ u = 0 ,\quad \theta=\bar  \theta_{|\d \Omega} \,\hbox{ on } \,\d \Omega\,.$$
We shall follow essentially the same path as in \cite{MSR} (see also \cite{SR-LNM}).

\begin{prop}
With the previous notation, 
$$ g_{\alpha | \Sigma} \rightharpoonup g_{|\Sigma} \hbox{ weakly in } L^2 \big((1+|v|^2)^{-1} dt d\mu_x(v) d\sigma_x\big)\, .
$$
\end{prop}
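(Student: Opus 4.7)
The plan is threefold: (i) establish a uniform bound on $(g_\alpha|_\Sigma)$ in the target weighted $L^2$ space, (ii) extract a weakly converging subsequence, and (iii) identify the weak limit with $g|_\Sigma$ by passing to the limit in a Green identity, following the strategy of \cite{MSR}.

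For step (i), the modulated entropy estimate \eqref{MLentropy-bound} together with the calculation in its proof yields $g_\alpha$ bounded in $L^\infty_t L^2(Mdvdx)$ and $\alpha^{-1}\cL g_\alpha$ bounded in $L^2_{t,x}(Ma^{-1}dv)$. Substituting into \eqref{LBeq} shows that $v\cdot\grad_x g_\alpha$ is uniformly controlled. A Green-type identity applied to $Mg_\alpha^2$ against a multiplier of the form $\phi(x)/(1+|v|^2)$, with $\phi$ smooth, vanishing on $\d\Omega$ and having outward normal derivative equal to one, produces the boundary contribution
$$
\int_0^T\!\int_\Sigma g_\alpha^2 \, \frac{|v\cdot n(x)|}{1+|v|^2}\, M(v)\, d\sigma_x\, dv\, dt,
$$
while the remaining bulk terms are dominated by the a priori estimates via Cauchy-Schwarz, the pairing against $\alpha^{-1}\cL g_\alpha$ being performed in $L^2(Ma^{-1}dv)$, the only uniform control available for this source. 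This yields the desired uniform bound in $L^2\bigl((1+|v|^2)^{-1}d\mu_x d\sigma_x dt\bigr)$.

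For steps (ii) and (iii), weak compactness extracts a subsequence with $g_\alpha|_\Sigma \rightharpoonup h$ in the target space. To identify $h = g|_\Sigma$, I test Green's identity for \eqref{LBeq} against products $\psi(t,x)\chi(v)$ with $\chi\in\Ker\cL$ a collision invariant, so that $\int\cL g_\alpha\,\chi\, Mdv =0$ and the source drops out:
$$
\int_\Sigma g_\alpha \,\psi\chi\, M(v\cdot n)\, d\sigma_x\, dv\, dt = \int g_\alpha (\alpha\d_t + v\cdot\grad_x)\psi\,\chi\, M\, dv\, dx\, dt - \alpha\Big[\int g_\alpha \,\psi\chi\, M\, dv\, dx\Big]_0^T.
$$
Passing to the limit using the weighted trace bound, the interior weak convergence $g_\alpha\rightharpoonup g$ and the vanishing of $\alpha\d_t\psi$, then comparing with the analogous identity written for $g$ (whose limit equation \eqref{kinetic-lim} has right-hand side orthogonal to $\Ker\cL$), one obtains the equality of $h$ and $g|_\Sigma$ in duality against all such $\psi\chi$. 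Combined with the strong vanishing on $\Sigma_+$ of the boundary variance $\tilde g_\alpha - \int\tilde g_\alpha d\mu_x$ and with the diffuse reflection identity \eqref{eq: boundary terms g} (which determines the trace on $\Sigma_-$ from that on $\Sigma_+$), this pins down $h = g|_\Sigma$.

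The main obstacle is the weighted trace bound of step (i). Without the weight $(1+|v|^2)^{-1}$, uniformity in $\alpha$ cannot hold, since the admissible boundary profile $\bar\theta_{|\d\Omega}(|v|^2-(d+1))/2$ grows polynomially in $v$; the weight is tuned precisely to absorb this growth. Carrying out the Green identity requires a careful choice of multiplier, control of the commutator between $v\cdot\grad_x$ and the velocity weight, and a Cauchy-Schwarz bound of the contribution of $\alpha^{-1}\cL g_\alpha$ in the adapted dissipation space $L^2(Ma^{-1}dv)$. Once step (i) is secured, the compactness and identification in steps (ii)-(iii) follow comparatively standard kinetic-theory patterns.
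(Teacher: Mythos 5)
Your overall strategy --- a weighted trace bound obtained from a multiplier/Green identity for $g_\alpha^2$, followed by weak compactness and identification of the limiting trace --- is the same as the paper's, but step (i) contains a genuine error. A multiplier of the form $\phi(x)/(1+|v|^2)$ with $\phi$ \emph{vanishing} on $\partial\Omega$ produces \emph{no} boundary contribution: the boundary term in Green's formula is
$$\int_\Sigma g_\alpha^2\,\frac{\phi(x)}{1+|v|^2}\,M(v)\,(v\cdot n(x))\,d\sigma_x\,dv\,,$$
which is identically zero when $\phi_{|\partial\Omega}=0$; what survives is only the interior term involving $g_\alpha^2\,(v\cdot\nabla_x\phi)/(1+|v|^2)$, which has no sign. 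Even if $\phi$ did not vanish on the boundary, the factor $v\cdot n$ appears to the first power and changes sign on $\Sigma$, so there is no coercive control of the trace; and the quantity $|v\cdot n|$ you claim to produce cannot come out of any smooth multiplier. The multiplier must itself contain the factor $v\cdot n(x)$ (with $n$ extended smoothly to a neighbourhood of $\partial\Omega$ and localized by a cut-off $\chi$), i.e.\ one tests against $(v\cdot n(x))\,\chi(x)/(1+|v|^2)$: the boundary term then carries the sign-definite weight $(v\cdot n)^2/(1+|v|^2)$, and the interior terms are controlled by the a priori bounds on $g_\alpha$ and on $q_\alpha$ exactly as you describe. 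This is the identity used in the paper.

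There is a second gap in the identification step. Testing Green's formula only against $\psi(t,x)\chi(v)$ with $\chi\in\Ker\cL$ identifies only the moments $\int \gamma\,\chi\,M\,(v\cdot n)\,dv$ of the limiting trace $\gamma$, i.e.\ the mass, momentum and energy fluxes --- not $\gamma$ itself as an element of $L^2\big((1+|v|^2)^{-1}dt\,d\mu_x\,d\sigma_x\big)$, which is what the proposition asserts. The paper passes to the limit in \eqref{trace-green} for \emph{arbitrary} test functions $\varphi(t,x,v)$: the collision term does not drop out, but it converges thanks to the uniform $L^2$ bound on $q_\alpha$, and comparison with Green's formula applied to the limiting equation \eqref{kinetic-lim} yields $\gamma=\bar g_{|\partial\Omega}$ in full. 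Your proposed fallback (reconstructing the $v$-dependence of $\gamma$ from the vanishing boundary variance and the reflection identity) might close this hole, but as written it is only a sketch, and it is unnecessary once general test functions are allowed.
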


\begin{proof}
The key point is that, in viscous regimes, the energy/entropy dissipation provides a uniform bound on the transport
$$ (\alpha \d_t +v\cdot \nabla_x) g_\alpha = -\frac1\alpha \cL g_\alpha  \in L^2 ( dt dx M a^{-1} dv ) \,.$$
We then expect to have 
\begin{itemize}
\item some regularity of the moments by the averaging lemma (see \cite{GLPS})
$$\left\|\int g_\alpha M\varphi(v) dv \right\|_{ L^2_t (H^{1/2}_x)} \leq C\,.$$ 
\item weak traces defined by  Green's formula (see \cite{bardos})
\begin{equation}
\label{trace-green} 
\begin{aligned}
\int_{t_1}^{t_2} \int_{\d \Omega} \varphi g_\alpha &(v\cdot n ) Mdv d\sigma_x dt = \alpha \int_\Omega \int \varphi  g_\alpha (t_1) Mdvdx - \alpha \int_\Omega \int \varphi  g_\alpha (t_2) Mdvdx\\
& +\int_{t_1}^{t_2} \int_ \Omega (v\cdot \nabla_x \varphi) g_\alpha Mdvdxdt -\int_{t_1}^{t_2} \int _\Omega \int \varphi  q_\alpha  b(v-v_*,\omega) M(v_*)   Md\omega dv_*dvdxdt,
\end{aligned}
\end{equation}
with $q_\alpha (v,v_*) := \displaystyle \frac1\alpha \big(g_\alpha(v) +g_{\alpha } (v_*) - g _\alpha(v')  - g _{\alpha  }(v'_*)\big )$.
\end{itemize}

\medskip

The first step is to establish a uniform a priori bound on the traces. From the identity
$$
\begin{aligned}
\int_{t_1}^{t_2} \int_{\d \Omega} g_\alpha^2 {(v\cdot n(x))^2 \over 1+|v|^2} \chi(x) Mdv d\sigma_x dt 
&= \alpha \int_\Omega \int   g^2_\alpha (t_1){v\cdot n(x) \over 1+|v|^2} \chi(x)  Mdvdx 
- \alpha \int_\Omega \int  g^2_\alpha (t_2) { v\cdot n(x) \over 1+|v|^2} \chi(x)  Mdvdx\\
& 
 + \int_{t_1}^{t_2} \int_\Omega \int   g^2_\alpha (t){1 \over 1+|v|^2}
v \cdot \nabla_x \big( (v\cdot n(x)) \chi(x) \big) Mdvdx dt  \\
& -2 \int_{t_1}^{t_2} \int _\Omega \iint g_\alpha q_\alpha {v\cdot n(x) \over 1+|v|^2} b(v-v_*,\omega) \chi(x) M(v_*)M(v)d\omega dv_* dvdxdt\,,
\end{aligned}
$$
where $n$ is a smooth extension of the outward unit normal in a neighborhood of the boundary associated with the test function $\chi$, we deduce that $(g_{\alpha | \d \Omega})$ is weakly compact in $L^2 ((1+|v|^2)^{-1} dt  d\mu_x(v) d\sigma_x)$.

\medskip

The second step is then to identify the limit $\gamma$ of $(g_{\alpha | \d \Omega})$. Taking limits in (\ref{trace-green}), we get
$$\int_{t_1}^{t_2} \int_{\d \Omega} \varphi \gamma (v\cdot n ) Mdv d\sigma_x dt = \int_{t_1}^{t_2} \int_ \Omega (v\cdot \nabla_x \varphi) \bar g Mdvdxdt -\int_{t_1}^{t_2} \int _\Omega  \int \varphi \bar q b(v-v_*,\omega) M(v_*)M(v)dv_*dvdxdt\,.$$
Applying Green's formula to the limiting kinetic equation (\ref{kinetic-lim}), we can express  the right-hand side of this identity in terms of the trace of $\bar g$, from which we deduce that $\gamma = \bar g_{| \d \Omega } $.
\end{proof}

\begin{rmk}
As already noted, the control on the free transport provides some regularity with respect to the spatial variable $x$. This is of course a major ingredient when considering the non linear case with convection terms. In that case, we also need to control the fast time oscillations (acoustic waves) which describe the compressible effects, and to prove that there is no constructive interference of these waves.
\end{rmk}


\section{The case of real rarefied gases}

\subsection{Diffusive limits of the hard-sphere dynamics}
\label{sec: hard sphere gas}

In the previous section, we have proven 
that the Fourier law is a scaling limit of the Boltzmann equation. However it does not really answer our original question of deriving the Fourier law from the first principles of mechanics, since the Boltzmann equation is already 
an idealization of gas dynamics.
What we would like to start with is a microscopic model of $N$ interacting particles. We denote by $X_N:=(x_1,\dots,x_N)$ their positions and by~$V_N:=(v_1,\dots,v_N)$ their velocities.
For the sake of simplicity, we shall focus on the hard-sphere interaction~:
$$
{dx_i \over dt } = v_i\, , \qquad {dv_i \over dt } = 0 $$
on 
$$\cD_N := \{Z_N = (X_N, V_N) \in \Omega^N \times {\mathbb R}^{dN} \,/\, \forall i\neq j ,\, |x_i - x_j|>\eps\}\,.$$
The collision law is given by the elastic reflection condition 
\begin{equation}
\label{reflection}
 v'_i = v_i - (v_i-v_j)\cdot n_{ij} \, n_{ij}, \quad v'_j = v_j +  (v_i-v_j)\cdot n_{ij}\,  n_{ij} \hbox{ when } x_i - x_j= \eps n_{ij}\, , \quad |n_{ij}| = 1\,.
 \end{equation} 
 It remains then to prescribe a boundary condition on $\d \Omega$. The natural way to express that one has a thermostat fixing the temperature $T$,  is to impose a stochastic reflection condition
 with law $M_\Sigma$.

 Although it is certainly not physically relevant (even neglecting quantum effects, we expect the  atoms to have long-range interactions), this model shares many important features with general systems of interacting particles~:
 \begin{itemize}
 \item it is completely deterministic~: given the configuration at time 0, we know exactly the state of the system both in the past and in the future;
 \item it satisfies the Poincar\'e recurrence theorem, meaning that its state is arbitrarily close to any given configuration for arbitrarily large times.
 \end{itemize}
 At first sight, those properties seem incompatible with the Boltzmann dynamics or the Fourier law since those evolutions  are not time-reversible, and they predict a relaxation towards equilibrium.
The suitable way to understand this paradox is to see the Boltzmann dynamics (and its hydrodynamic approximations) as an averaged behaviour, i.e. a \emph{law of large numbers} as the number of particles $N$ tends to infinity.

\medskip
More precisely, instead of considering one specific realization  corresponding to a given initial configuration, we shall endow the space of initial configurations 
$\cD_N$
with a probability density $F_{N,0}$ (assumed to be symmetric as particles are exchangeable), and then look at the evolution of this measure under the hard-sphere dynamics.
We shall further assume that initially the particles are identically distributed and almost independent (up to the exclusion condition), so that 
\[
F_{N,0} (Z_N) = {1_{\cD_N}\over \cZ_N} \prod_{i=1}^N f_0(x_i,v_i) \,,
\]
where the partition function~$\cZ_N$ is a normalization constant ensuring that~$\displaystyle \int F_{N,0} dZ_N = 1$.
The distribution~$F_N\equiv F_N(t,Z_N) $ satisfies the Liouville equation
\begin{equation}
\label{liouville}
\d_t F_N +\sum_{i=1}^N v_i \cdot \nabla_{x_i} F_N = 0 \quad \hbox{on } \cD_N, \qquad F_{N|t = 0} = F_{N,0}\, ,
\end{equation}
supplemented with the boundary condition
$$F_N(Z_N) = F_N(Z_N^{'(i,j)})\quad \hbox{ on } \d \cD_N^{i,j+}$$
 where $Z_N^{'(i,j)}$ is the configuration obtained from $Z_N$ by the scattering (\ref{reflection}) and
 $$
	\begin{aligned}\d\cD_N^{i,j+}:= \Big\{Z_N \in {\mathcal D}_N \, \Big| \quad   &|x_i-x_j| = \eps \, , \quad   (v_i-v_j) \cdot (x_i- x_j) >0 \\
& \mbox{and}  \quad\forall (k,\ell) \in  [1,N]^2\setminus \{(i,j)\} \, , k \neq \ell \, ,     |x_k-x_\ell| > \eps
\Big\} .
\end{aligned}
$$
In the following, we shall consider the idealized situation of a periodic domain $\Omega ={\mathbb T}^d := [0,1]^d$ in order to neglect the boundary effects. 
We are going to review the main steps of the derivation of Fourier law from~\cite{BGSR2} in this simplified framework.

It is possible to define the microscopic counterpart of the  heat reservoirs  \eqref{Req} at the  boundary of a general domain $\Omega$
by adding the diffuse reflection condition
 $$
 F_N(Z_N) = M_\Sigma ( v_i) \int F_N (Z_N^{'(i)}) (v'_i \cdot n_i) _- dv'_i \quad \hbox{ on } \d \cD_N^{i-}
 $$
 where $Z_N^{'(i)}$ is the configuration obtained from $Z_N$ by changing $v_i$ into $v'_i$ and
\begin{align*}
\d  \cD_N^{i-}:= \Big \{Z_N \in {\mathcal D}_N \; \Big|  \quad   d(x_i,\partial\Omega) = \eps  \,  \,\mbox{and}   \, \,
	v_i\cdot n_i<0
	\Big\} \, ,
\end{align*}
denoting~$n_i$ the outward unit normal at the contact point.
Nevertheless, we are not aware of any result on the derivation of  Boltzmann equation from microscopic models with such non-equilibrium boundary conditions.

\medskip

We   expect $F_N$ to be a very complicated object (depending on all individual trajectories $(X_N(t), V_N(t))$  of the hard-sphere dynamics), but we are interested only in the statistical properties of the system, in particular on \emph{the 1-particle density}
 $$F_N^{(1)} (t,x,v) := \int F_N(t,x, v, x_2,v_2, \dots, x_N, v_N)\,  dz_2 \dots dz_N\,,$$
from which we can compute the thermodynamic parameters of the gas $U_N(t,x), T_N(t,x)$.
 The Boltzmann equation for perfect gases (\ref{Beq}) (and its linearized version (\ref{LBeq})) expresses a balance between transport and collisions. When the Knudsen and Mach numbers are of order 1, particles undergo in average one collision per unit of time, while moving of a length $O(1)$. A rough computation due to Maxwell shows that at the microscopic level a tagged particle moving with velocity $v$  in a regular lattice of $N$ fixed obstacles of size $\eps$ (in a unit volume $\Omega \subset {\mathbb R}^d$) will encounter an obstacle on $[0,t]$ with probability $O(N\eps^{d-1} Vt)$. 
 \begin{figure}[h] 
\centering
\includegraphics[width=3in]{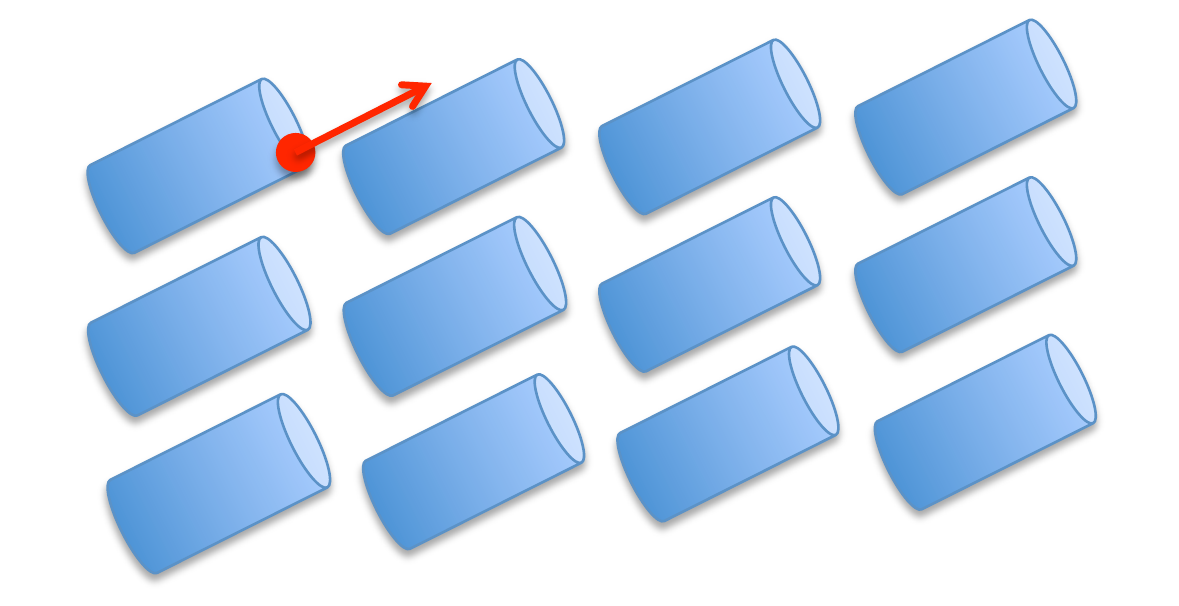} 
\caption{\small  $N$ spheres of size $\eps$ are placed on a lattice. The volume covered by a particle of velocity $v$ during a time $t$ is $|v| t \eps^{d-1}$. Thus the mean free path is of order $1/(N \eps^{d-1})$ which leads to the Boltzmann-Grad scaling $N\eps^{d-1} = O(1/\Kn)$ (see \cite{grad}).
}
\end{figure}
In other words, the perfect gas approximation should be obtained in the limit $N\to \infty$, under the Boltzmann-Grad scaling~$N\eps^{d-1} = O(1/\Kn)$. Recall that because of the very different natures of the hard-sphere dynamics and perfect gas dynamics, we cannot expect a strong convergence in this limit $\eps \to 0$, $N\to \infty$, $N\eps^{d-1} = O(1)$, but only a \emph{weak convergence after averaging over initial configurations.}

\subsection{The heat equation as a scaling limit of the local conservation of energy}

At the formal level, one can see that the first marginal $F_N^{(1)}$ of the $N$-particle distribution $F_N$ satisfies an equation similar to the Boltzmann equation, by integrating (\ref{liouville}) with Green's formula, and using the exchangeability~:
 $$ \d_t F_N^{(1)} + v\cdot \nabla_{x}  F_N^{(1)} = (N-1)\eps^{d-1} \int F_N^{(2)} (t,x, v, x+\eps \omega_2, v_2) ((v_2- v)\cdot \omega_2)  d\omega_2 dv_2\,.$$
 Splitting the boundary term according to the sign of $(v_2- v)\cdot \omega_2$, and using the reflection condition to express the distribution of post-collisional configurations in terms of the distribution of pre-collisional configurations
 $$ F_N^{(2)} (t,x, v, x+\eps \omega_2, v_2)= F_N^{(2)} (t,x, v', x+\eps \omega_2, v'_2) \hbox{ with } v'= v- (v-v_2) \cdot \omega_2\,\omega_2\,, \quad v'_2 = v_2+ (v-v_2) \cdot \omega_2\,\omega_2\,,$$
 we obtain finally
 $$\d_t F_N^{(1)} + v\cdot \nabla_{x}  F_N^{(1)} = (N-1)\eps^{d-1} \int \Big(F_N^{(2)} (t,x, v', x+\eps \omega_2, v'_2) -  F_N^{(2)} (t,x, v, x-\eps \omega_2, v_2)\Big)  ((v_2- v)\cdot \omega_2)_+  d\omega_2 dv_2 
$$
  which we shall abbreviate 
  \begin{equation}
 \label{BBGKY1}
  \d_t F_N^{(1)} + v\cdot \nabla_{x}  F_N^{(1)} = C_{1,2} F_N^{(2)} .
  \end{equation}
 
 At this stage, there are three important differences with the Boltzmann equation
 \begin{itemize}
 \item the collision operator involves a prefactor $(N-1) \eps^{d-1}$ which is not exactly $1/\Kn$. Note however that this would not be the case if we  had used the grand canonical formalism instead of the canonical formalism (i.e. not prescribing a priori the number of particles $N$, but just a Poisson law for this number);

 \item the particles are not pointwise, which implies that there is a small shift between colliding particles as~$|x-x_2| = \eps$;
 
 \item but the main difficulty is that the equation for $F_N^{(1)}$ is not closed~: the probability of having a collision depends on the joint probability $F_N^{(2)}$ of having two particles in a collisional configuration.
 \end{itemize}
This means that in addition to the scaling assumption, the perfect gas approximation relies on a \emph{strong chaos} \emph{property}, referred to as Boltzmann's Stosszahlansatz (which is actually assumed to hold for all times in the original work of Boltzmann). The question here  is to understand when this assumption is satisfied, for the Boltzmann equation and subsequently the Fourier law to provide good approximations of the macroscopic evolution of the particle system.

 \medskip
 The mathematical justification of the Boltzmann equation  in the Boltzmann-Grad limit, {\it in the absence of boundary effects}, goes back to Lanford \cite{lanford}. It relies on the study of the BBGKY hierarchy, i.e. the hierarchy of equations governing the whole family of marginals $(F_N^{(s)})_{s \leq N}$ (since none of them is closed for $s<N$)~:
\begin{equation}
\label{BBGKY}
 \d_t F_N^{(s)} + \sum_{i=1}^s v_i \cdot \nabla_{x_i}  F_N^{(s)}:= C_{s,s+1} F_N^{(s+1)}, 
 \end{equation}
 where $C_{s,s+1}$ has a structure very similar to $C_{1,2}$: defining~$Z_s^{\langle i\rangle}:=(z_1,\dots,z_{i-1},z_{i+1},\dots z_s)$,
 $$
 \begin{aligned}
 C_{s,s+1} F_N^{(s+1)}(t,Z_s) = &(N-s)\eps^{d-1} \sum_{i=1}^s\int \Big(F_N^{(s)} (t,Z_s^{\langle i\rangle}, x_i, v'_i, x_i+\eps \omega_{s+1} , v'_{s+1} ) \\
 & -  F_N^{(s+1)} (t,Z_s^{\langle i\rangle},x_i, v_i, x_i-\eps \omega_{s+1} , v_{s+1})\Big)  ((v_{s+1}- v_i)\cdot \omega_{s+1})_+  d\omega_{s+1} dv_{s+1}\,.
 \end{aligned}
 $$
 In particular it involves a prefactor $(N-s)\eps^{d-1}$ and micro-translations $O(\eps)$, which are expected to disappear in the limit $\eps \to 0$.
 The main difficulty at this stage is that the transport equation is set on $\cD_s$ (with the reflection condition on 
  the internal boundaries $\d\cD_s^{i,j}$).
 Taking limits as $\eps \to 0$ then requires   proving that, in average, this operator behaves as the free transport, or in other words that collisions between  particles which are already correlated are negligible (see \cite{GSRT} or \cite{PSS} for a careful estimate of these recollisions). Lanford's theorem states the convergence towards the Boltzmann equation for short times starting from any chaotic initial data.
 
 \begin{thm}[Lanford] 
 Consider a system of $N$ hard spheres of diameter $\eps $ on ${\mathbb T}^d$, initially ``independent" and identically distributed with density $f_0$
\begin{equation}
\label{eq: Gibbs measure}
F_{N,0}(X_N, V_N) = {\indc_{\cD_N}(X_N) \over \cZ_N} \prod_{i=1}^N f_0 (z_i)  
\hbox{ with } f_0 \hbox{ continuous  such that } \| f_0 (z) \exp (\beta |v|^2 +\mu )\|_\infty \leq 1\,,
\end{equation}
for some constants $\beta >0, \mu$.
 Then, in the Boltzmann Grad limit $N\to \infty$, $\eps \to 0$, $N\eps^{d-1}\alpha = 1$, the 1-particle distribution $F_N^{(1)}$ converges almost everywhere  to the solution of the Boltzmann equation
 $$\d_t f +v\cdot \nabla_x f = \frac1\alpha Q(f,f)\, , \qquad f_{|t = 0} = f_0\, ,$$
 on some time interval $[0, C_{\beta, \mu} \alpha]$.
Furthermore, one has the convergence of all marginals $F_N^{(s)}$ towards the  tensor products $f^{\otimes s}$, which implies that the empirical measure $\displaystyle \mu_n(t) = \frac1N \sum_{i=1}^N \delta_{x-x_i(t), v-v_i(t)}$ converges in law to $f$.
 \end{thm}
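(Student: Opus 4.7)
The plan is to compare, term by term in a Duhamel expansion, the BBGKY hierarchy \eqref{BBGKY} with the formal limiting \emph{Boltzmann hierarchy}
$$\partial_t f^{(s)} + \sum_{i=1}^s v_i\cdot\nabla_{x_i} f^{(s)} = C^0_{s,s+1} f^{(s+1)}, \qquad f^{(s)}_{|t=0} = f_0^{\otimes s},$$
where $C^0$ is obtained from $C_{s,s+1}$ by setting $\eps=0$ in the micro-translations and replacing $(N-s)\eps^{d-1}$ by $\alpha^{-1}$. Tensorization of the initial datum together with linearity imply that $f^{(s)} = f^{\otimes s}$ with $f$ solving the Boltzmann equation, so it suffices to prove that each BBGKY marginal converges to the corresponding solution of this hierarchy. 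Iterating Duhamel's formula along the $s$-body hard-sphere semigroup $S_s$ gives
$$F_N^{(s)}(t) = \sum_{n\geq 0} \int_{0\leq t_n \leq \cdots \leq t_1 \leq t} S_s(t-t_1) C_{s,s+1} S_{s+1}(t_1-t_2) \cdots C_{s+n-1,s+n} S_{s+n}(t_n) F_{N,0}^{(s+n)} \, dt_1\cdots dt_n,$$
with its analogue for $f^{(s)}$. Uniform control is obtained in exponentially weighted norms $\|F^{(s)}\|_{\beta,\mu}:=\sup F^{(s)}(Z_s) \exp(\beta \sum_i|v_i|^2/2 + \mu s)$ by a Cauchy--Kowalewski-type argument: losing a little of $(\beta,\mu)$ at each application of $C_{s,s+1}$, standard Gaussian moment bounds show that each collision operator costs only a factor $C/\alpha$, giving absolute and uniform convergence of both series on a short interval $[0, C_{\beta,\mu}\alpha]$ independent of $N$ and $\eps$. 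The prefactor $(N-s)\eps^{d-1}\simeq\alpha^{-1}$ in $C_{s,s+1}$ is exactly what the Boltzmann--Grad scaling is designed to produce, while the Gibbs normalization $\cZ_N$ tends to $1$ through a short cluster expansion.

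The heart of the proof is the termwise convergence of the $n$-th Duhamel term. Each operator $C_{s+k-1,s+k}$ adjoins a new particle through a \og gain\fg{} or \og loss\fg{} term, thereby generating a branching structure --- a \emph{collision tree} --- which, read backwards in time, defines a \emph{pseudo-trajectory}. For BBGKY this pseudo-trajectory is the genuine $(s+n)$-body hard-sphere flow on $\cD_{s+n}$; for the Boltzmann hierarchy it is the free flow interrupted by isolated binary scatterings at the branching times. The two pseudo-trajectories are parametrized by the same variables $(t_i,\omega_i,v_{s+i})$ and agree except on configurations where two already-created particles revisit each other at distance $\eps$ at some intermediate time --- a \emph{recollision}. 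The main obstacle, and the most delicate step, is to show that for each fixed tree the set of parameters producing a recollision has measure $O(\eps^\delta)$ for some $\delta>0$: freezing all but one variable, the recollision constraint localizes the remaining variable to a tube of thickness $O(\eps)$ in velocity space or to a small solid angle in $\omega$, so that the singular set vanishes in the Boltzmann--Grad limit. Combined with continuity of $f_0$ and the uniform series bounds, this yields almost-everywhere convergence $F_N^{(s)} \to f^{\otimes s}$.

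Chaos propagation, i.e.\ $F_N^{(s)}\to f^{\otimes s}$ for every fixed $s$, is equivalent by the classical Sznitman argument to convergence in law of the empirical measure $\mu_N(t)$ to the deterministic measure $f(t,x,v)\,dx\,dv$, so the last assertion of the theorem follows. The time restriction $t\in[0,C_{\beta,\mu}\alpha]$ is the radius of convergence of the Cauchy--Kowalewski series and is the true limitation of this short-time strategy; extending the convergence to long times outside perturbative regimes remains one of the central open problems of kinetic theory, and is precisely the regime in which the Fourier law would be expected to arise at the particle level.
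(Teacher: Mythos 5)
Your outline is the standard Lanford argument and coincides with the strategy the paper itself sketches: iterated Duhamel expansion of the BBGKY hierarchy, comparison with the limiting Boltzmann hierarchy via collision trees and pseudo-trajectories, measure estimates showing recollision sets are negligible, and Cauchy--Kowalewski-type continuity bounds in weighted $L^\infty$ norms which produce the restriction $t\leq C_{\beta,\mu}\alpha$, with tensorization plus Sznitman's equivalence giving the empirical-measure statement. The only inaccuracy is the aside that $\cZ_N\to 1$: in the Boltzmann--Grad scaling the partition function does not tend to $1$ (for $d=2$ it is merely bounded below as in \eqref{ZN}, and for $d\geq 3$ it degenerates), and the correct, standard substitute is the uniform control of ratios $\cZ_{N-s}/\cZ_N$ yielding $F_{N,0}^{(s)}\leq C^s\prod_{i\leq s}f_0(z_i)$, which is all your argument actually uses.
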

 This result is however not completely satisfactory, especially if we have in mind to derive the Fourier law.
 
 \begin{itemize}
 \item First of all, it holds only for short times (short being measured here in terms of the average number of collisions per particle), which is incompatible with the fast relaxation limit (where we assume that the time of observation is much longer than the mean free time, i.e. $\alpha \ll1$).
  
 \item Furthermore, extending this result  to the case of  a bounded domain $\Omega$ with diffuse reflection on the boundary is not an easy exercise~:  given a smooth temperature profile $T$ on $\d \Omega$, the usual Cauchy-Kowalewski estimates for the BBGKY hierarchy fail in general, even for short times.
\end{itemize}
The challenge to obtain the Fourier law directly from the hard sphere system is then to obtain a priori bounds and to establish the propagation of chaos on \emph{time intervals which are much longer than the mean free time}. 
 
 It is important to note here that there is something strange in our mathematical understanding of chaos. In Lanford's strategy, independence is assumed at initial time, and it is proved that it will survive at least for a short time, but collisions are somehow the enemy. What is expected at the physical level is quite the opposite~: collisions should induce a mixing mechanism and bring some chaos. This is however very far from being a mathematical statement!

  \medskip
 A small step in this direction has been made in \cite{BGSR2} in dimension $d=2$ when considering very small fluctuations  around an equilibrium, say for instance the Gibbs measure
 $$ 
 M_N(Z_N) = {\indc_{\cD_N}(Z_N) \over \cZ_N} {1\over (2\pi)^N} \exp \Big( -\sum_{i=1}^N {|v_i|^2\over 2} \Big) \, ,\quad 
 F_{N,0} = M_N +\frac1N \delta \! F_{N,0}\, .
$$
{\color{black} Note that the existence of such a microscopic equilibrium is the only way we are able to take advantage of the cancellations between the gain term and loss term in the collision operator. The usual restriction on the  time of convergence is related to the fact that these cancellations are neglected when establishing a priori bounds.}

{\color{black} Close to equilibrium, we can actually use some comparison principle to improve these a priori bounds.}
More precisely, the main idea is that we can control a very weak form of chaos by an argument of 
 exchangeability, independently of the dynamics of the system  (see Lemma \ref{lem: exchangeability}).
  We can then import these (global in time) a priori bounds in the proof of Lanford, to extend for long times the convergence towards the (linearized) Boltzmann equation.

 \begin{thm}[Bodineau, Gallagher, Saint Raymond] 
 \label{bgsr-thm}
Consider a system of $N$ hard spheres of diameter $\eps $ on ${\mathbb T}^2$, initially close to equilibrium
\begin{align*}
 \delta \! F_{N,0} (Z_N) = M_{N}(Z_N)  \sum_{i=1}^N   g_0(z_i)  \hbox{  with } \int Mg_0(z) dz =0\, , \quad \text{and} \quad
\| g_0\|_{L^\infty}   \leq C_0\, .
\end{align*}
Then  in the Boltzmann Grad limit $N\to \infty$, $\eps \to 0$, $N\eps \alpha = 1$, the one-particle fluctuation~$\delta \! F_N^{(1)}(t, x,v)$ 
is close to  the solution of the linearized Boltzmann equation $M g_\alpha   $
$$\d_t g_\alpha +v\cdot \nabla_x g_\alpha = -\frac1\alpha \cL g_\alpha\, , \quad g_{\alpha | t = 0} = g_0\,,
$$
in the sense that for $t\leq  T/\alpha$,
$$\big\|  \delta \! F_N^{(1)}(t)- M g_\alpha (t)
\big\|_{L^2} \leq C   
 \frac{  e^{C/  \alpha^2} } { \log\log N} \, \cdotp
$$
 \end{thm}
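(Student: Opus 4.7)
The plan is to set up the linearized BBGKY hierarchy for the fluctuations $\delta F_N^{(s)}:=F_N^{(s)}-M_N^{(s)}$ around the Gibbs equilibrium $M_N$, and then compare it, term by term in a truncated Duhamel expansion, with the linearized Boltzmann hierarchy solved by $(M g_\alpha)^{\otimes s}$ (symmetrized). Because $M_N$ is invariant under the hard-sphere flow, the collision operator acting on $\delta F_N^{(s+1)}$ can be rewritten as a linearized operator in which the gain term and loss term appear \emph{with a key cancellation} — this is what makes the perturbative regime tractable globally in $N$. Writing
\[
\delta F_N^{(1)}(t)=\sum_{k=0}^{K-1}\int_0^t\!\!\cdots\!\int_0^{t_{k-1}}\! \mathbf S_1(t-t_1)\mathcal C_{1,2}\mathbf S_2(t_1-t_2)\cdots \mathcal C_{k,k+1}\delta F_{N,0}^{(k+1)}\,dt_k\cdots dt_1 \;+\; R_K(t),
\]
and comparing with the analogous series for the linearized Boltzmann hierarchy, one reduces everything to (a) estimating $R_K$, (b) showing that the finite-$k$ pseudo-trajectories with micro-translations $O(\varepsilon)$ and possible recollisions match, up to vanishing error, the Boltzmann ``trees'' of branching collisions as $\varepsilon\to 0$.

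The first crucial input is a global-in-time a priori bound on $\delta F_N^{(s)}$ which does not come from the dynamics but from \emph{exchangeability plus stationarity}: since $M_N$ is invariant and $\delta F_N(t)$ has the same $L^2(M_N^{-1}\,dZ_N)$ norm as the initial datum (the flow is measure-preserving and the reflection unitary in this norm), one obtains, via a symmetrization/exchangeability argument (the lemma alluded to in the introduction), a uniform control
\[
\|\delta F_N^{(s)}(t)\|_{L^2}\leq C_0^{\,s},\qquad t\geq 0,\ s\leq N,
\]
independently of the time. This is what permits a \emph{restart} after each short Lanford step. My plan is therefore to partition $[0,T/\alpha]$ into slices of kinetic length $\tau$ of order $\alpha$ (a bounded number of collisions per particle per slice), run Lanford's short-time convergence on each slice starting from data that is only weakly chaotic (controlled by the exchangeability bound), and iterate.

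The technical heart, and the main obstacle I expect, is the pseudo-trajectory analysis on a generic slice at diffusive time. One has to truncate the Duhamel sum at some order $K=K(N)$, bound the remainder $R_K$ by $(C\tau/\alpha)^K$ times the a priori bound (forcing $K$ to be large for the remainder to vanish), and then prove that the contribution of pseudo-trajectories involving a recollision or an overlap with a previously created branch is negligible. In a two-dimensional hard-sphere flow, recollision exclusion proceeds by a delicate measure estimate on adjoint trajectories showing that, outside a bad set of scattering parameters of measure $\eta(\varepsilon,K)$ going to $0$, the Boltzmann tree and the BBGKY tree remain $\varepsilon$-close. Quantitatively, this measure estimate deteriorates very fast with $K$: one obtains recollision control as long as $K$ satisfies a doubly logarithmic constraint in $N$, i.e. $K\leq c\,\log\log N$. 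Propagating the per-slice error across $T/(\alpha\tau)$ slices accumulates multiplicatively, producing the prefactor $e^{C/\alpha^{2}}$, while the best allowed truncation $K\sim\log\log N$ is what leaves the final error of size $(\log\log N)^{-1}$.

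The last piece will be to pass from the comparison of the BBGKY pseudo-trajectory expansion to the Boltzmann tree expansion to the announced $L^2$ bound on $\delta F_N^{(1)}(t)-Mg_\alpha(t)$. This is done by identifying the truncated Boltzmann tree expansion with the iterated Duhamel formula for the linearized Boltzmann semigroup applied to $g_0$, using continuity of $e^{-t\mathcal L/\alpha}$ in the relevant weighted $L^2$ norm, and finally bounding the discarded tail by the same doubly-logarithmic threshold. The bookkeeping is intricate, but the conceptual novelty is the combination of the equilibrium-based exchangeability a priori estimate with a repeated local-in-time Lanford argument, which together break the short-time barrier of the original BBGKY analysis.
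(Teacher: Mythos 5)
Your overall architecture (iterated Duhamel expansion, comparison of BBGKY and Boltzmann pseudo-trajectories, an equilibrium-based a priori bound to break the Lanford time barrier, recollision estimates) matches the paper's strategy, but there is a genuine gap at the technical heart, namely in how the remainder of the truncated expansion is controlled. The a priori bound you invoke is not the one that is actually available: exchangeability and the invariance of $M_N$ give an $L^2$ bound on the \emph{full} $N$-particle fluctuation, $\int \delta\! F_N^2(t)/M^{\otimes N}\,dZ_N = O(N e^{C/\alpha^2})$, where the factor $e^{C/\alpha^2}$ comes from the lower bound $\cZ_N\geq c\,e^{-C/\alpha^2}$ on the partition function in dimension~$2$ (not from error accumulation across time slices, as you suggest; this is also precisely why the argument is confined to $d=2$). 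What exchangeability then yields is not a uniform bound $\|\delta\! F_N^{(s)}(t)\|_{L^2}\leq C_0^{\,s}$ on the marginals, but a \emph{cumulant expansion} $\delta\! F_N^{(s)}=M^{\otimes s}\sum_{m=1}^s\sum_{\sigma\in{\mathfrak S}^m_s}f_N^m(Z_\sigma)$ with the decay $\sum_m\binom{N}{m}\|f_N^m\|^2_{L^2}=O(Ne^{C/\alpha^2})$, i.e. $f_N^m=O(\eps^{(m-1)/2})$ in $L^2$. This finer structure is indispensable: the collision operator $C_{s,s+1}$ is not bounded on $L^2$ (it acts on hypersurfaces), and the usable continuity estimate, once combined with the transport, loses a factor $\eps^{-1/2}$ per collision. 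Your plan of bounding $R_K$ by $(C\tau/\alpha)^K$ times an a priori bound tacitly relies on the $L^\infty$ continuity estimates, which are unavailable here because $\delta\! F_{N,0}=M_N\sum_i g_0(z_i)$ is of size $N$, not $O(1)$, in $L^\infty$; while in $L^2$ the naive remainder estimate diverges like $\eps^{-K/2}$. The proof closes only because the $\eps^{(m-1)/2}$ decay of the cumulants exactly compensates the $\eps^{-1/2}$ loss per collision operator, giving a remainder of size $\sum_k 2^{k^2}\big(Ct^{3/2}h^{1/2}/\alpha^2\big)^{2^k}$, small provided $t^{3/2}h^{1/2}\ll\alpha^2$.

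A second, related point: ``restarting Lanford on each slice from weakly chaotic data'' does not work as stated, because the cumulant estimate gives only $\delta\! F_N^{(s)}=M^{\otimes s}\sum_m f_N^1(z_m)+O(N^{-1/2})$ in $L^2$, which is too weak to close the hierarchy at an intermediate time (the collision kernel is too singular to exploit such an approximate factorization, as the paper's remark following the exchangeability lemma stresses). The correct device is the pruning procedure: a \emph{single} Duhamel expansion carried all the way down to $t=0$ for trees with fewer than $n_k=2^k$ branchings in the $k$-th backward time slice of length $h$ (so the main term is always evaluated on the genuinely chaotic initial datum), the a priori cumulant bounds being used only to kill the super-exponentially branching remainders evaluated at intermediate times. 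The interplay between the choice of $h$, the truncation, and the geometric recollision estimates is then what produces the $1/\log\log N$ rate.
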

In particular, in diffusive regime, we recover the Fourier law.
 \begin{corollary}
\label{stokes-cor}
Consider a system of $N$ hard spheres of diameter $\eps $ on ${\mathbb T}^2$, initially distributed according to
$$
 \delta \! F_{N,0} (Z_N) = M_{N}(Z_N)  \sum_{i=1}^N  \Big(  u_0(x_i) \cdot v _i+ \frac{|v_i|^2 - 4}2 \, \theta_0(x_i)\Big)    \hbox{  with }\nabla_x \cdot u_0 = 0\, .
 $$
Then  in the Boltzmann Grad limit $N\to \infty$, $\eps \to 0$, $N\eps = \frac1 \alpha \to \infty$ (with $\alpha (\log\log \log N)^{1/2}\gg1$), the  one-particle fluctuation~$\delta \! F_N^{(1)}({ \tau\over \alpha} ,x,v)$ 
is close (in $L^2$ norm) to  $M g(\tau,x,v):=  M\big( u (\tau,x) \cdot v + \frac{|v|^2 - 4}2 \theta (\tau,x) \big)\, ,$
where~$(u,\theta)$ satisfies the Stokes-Fourier equations
$$\begin{aligned}
&\partial_\tau u -\nu  \Delta_x u = -\nabla_x p  \,, \quad \nabla_x \cdot u = 0\,, \quad u_{|t = 0} = u_0\,,\\
&\partial_\tau \theta -\kappa  \Delta_x \theta = 0\,, \quad \theta_{|t = 0} = \theta_0\,. \\
\end{aligned}
$$
 \end{corollary}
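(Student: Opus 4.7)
The plan is to chain the two main results established above: Theorem~\ref{bgsr-thm} bridges the hard-sphere dynamics to the linearized Boltzmann equation, while Theorem~2.1 (in its periodic version) bridges the diffusively scaled linearized Boltzmann equation to the Stokes-Fourier system. Concretely, set $g_0(x,v) := u_0(x)\cdot v + \frac{|v|^2-4}{2}\theta_0(x)$. This is a bounded infinitesimal Maxwellian; moreover $\int M(v)\, v\, dv = 0$ and, up to replacing $\theta_0$ by its zero-mean part (absorbed into the choice of reference equilibrium), $\int M g_0\, dx\, dv = 0$, so the hypotheses of Theorem~\ref{bgsr-thm} are satisfied.

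First, apply Theorem~\ref{bgsr-thm} at time $t = \tau/\alpha$: there exists $g_\alpha$ solving
$$
\d_t g_\alpha + v\cdot \nabla_x g_\alpha = -\frac{1}{\alpha}\cL g_\alpha\, ,\qquad g_{\alpha\,|\, t=0} = g_0\, ,
$$
such that
$$
\bigl\|\delta \! F_N^{(1)}(\tau/\alpha) - M g_\alpha(\tau/\alpha)\bigr\|_{L^2} \;\leq\; \frac{C\, e^{C/\alpha^2}}{\log\log N} \, \cdotp
$$
The assumption $\alpha(\log\log\log N)^{1/2} \gg 1$ is equivalent to $C/\alpha^2 \ll \log\log\log N$, which forces the right-hand side to vanish as $N\to\infty$.

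Next, rescale time by setting $\tilde g_\alpha(\tau,x,v) := g_\alpha(\tau/\alpha,x,v)$, which satisfies the diffusively scaled linearized Boltzmann equation~\eqref{LBeq} on $\mathbb{T}^2$, with initial data $g_0 \in \Ker \cL$. Theorem~2.1 specialized to the torus (where all boundary contributions, the modulated temperature, and the Dirichlet conditions on $u$ and $\theta$ simply drop out) yields weak $L^2$ convergence of $\tilde g_\alpha$ to $g(\tau,x,v) := u(\tau,x)\cdot v + \frac{|v|^2-4}{2}\theta(\tau,x)$, with $(u,\theta)$ solving the periodic Stokes-Fourier system with initial data $(u_0,\theta_0)$. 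To upgrade to strong $L^2$ convergence --- needed for the triangle inequality below --- I would exploit the spectral gap of $\cL$: the linear energy inequality~\eqref{Lentropy} furnishes $\|(I-\Pi)\tilde g_\alpha\|_{L^2(dt\, dx\, Mdv)} = O(\alpha)$, reducing matters to strong $L^2$ convergence of the hydrodynamic moments $(\tilde\rho_\alpha,\tilde u_\alpha,\tilde\theta_\alpha)$ to $(0,u,\theta)$. For smooth periodic initial data this follows from velocity-averaging compactness in $x$ combined with the uniform energy bound and the standard $L^2$ well-posedness of Stokes-Fourier on $\mathbb{T}^2$.

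The triangle inequality
$$
\bigl\|\delta \! F_N^{(1)}(\tau/\alpha) - Mg(\tau)\bigr\|_{L^2} \leq \bigl\|\delta \! F_N^{(1)}(\tau/\alpha) - Mg_\alpha(\tau/\alpha)\bigr\|_{L^2} + \bigl\|Mg_\alpha(\tau/\alpha) - Mg(\tau)\bigr\|_{L^2}
$$
then makes both terms tend to zero. The hard part, as I see it, is precisely this strong-$L^2$ upgrade: Theorem~2.1 delivers only weak convergence, so the spectral-gap plus moment-compactness step is essential. The linear, periodic setting keeps this step routine, but it is the only non-trivial ingredient that combines the microscopic estimate of Theorem~\ref{bgsr-thm} with the hydrodynamic limit.
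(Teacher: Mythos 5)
Your proposal is correct and is exactly the argument the paper intends: the corollary is stated without proof precisely because it follows by chaining Theorem~\ref{bgsr-thm} (applied at time $t=\tau/\alpha$, with the condition $\alpha(\log\log\log N)^{1/2}\gg 1$ being exactly what absorbs the $e^{C/\alpha^2}/\log\log N$ error) with the diffusive limit of the linearized Boltzmann equation on the torus, where the well-prepared data --- $\nabla_x\cdot u_0=0$ together with the Boussinesq relation $\rho_0+\theta_0=0$ that is built into the specific combination $\frac{|v|^2-4}{2}\theta_0$ --- suppresses the acoustic oscillations and makes your strong-$L^2$ upgrade go through. Two small corrections: $g_0$ is \emph{not} bounded in $L^\infty$ (it grows like $|v|^2$), so the hypothesis of Theorem~\ref{bgsr-thm} must be read in the Gaussian-weighted space $\tilde L^\infty$ used elsewhere in the note; and the mean-zero hypothesis does force $\int\theta_0\,dx=0$, as you note.
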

Note that the Knudsen number $\alpha$, although small, is much larger than the radius of particles $\eps$
$$\alpha \gg (\log \log|\log \eps| )^{-1/2}$$
and that such a scale separation does not really make sense from the physical point of view. 
But, for the moment, we do not know how to generalize these results in the nonlinear setting~:  this is  due in particular to the fact that the entropy (which is the natural uniform bound) is not suitable to control quadratic nonlinearities (which is also the reason why the Boltzmann equation is only known to have  renormalized solutions globally in time).

%

\subsection{Lanford's proof}

We shall not give in this note all the details of the proof of Theorem \ref{bgsr-thm} as it  is rather technical. 
The general strategy follows the lines of Lanford's proof. More precisely, it relies on an integral formulation of the BBGKY hierarchy (\ref{BBGKY}) obtained by iterating  Duhamel's formula
$$\delta \! F_N^{(1)}(t) = \sum_{n = 1}^{N}  Q_{1,n+1} (t) \, \delta \! F_{N,0} ^{(n +1)}\, ,
$$
where $Q_{1,n+1}$ encodes $n$ collisions and ${\bf S}_s$ is the transport operator on $\cD_s$
$$Q_{1,n+1} (t) :=    \int_0^t \int_0^{t_2}  \dots \int_0^{t_{n}}  {\bf S}_1(t-t_2) C_{1,2}  {\bf S}_{2}(t_2-t_{3})  
  C_{2,3} \dots   {\bf S}_{1+n}(t_{1+n})   \, dt_{2} \dots dt_{n+1} \,.$$
  This expansion has  a geometric interpretation  in terms of collision trees $(a_i) _{1\leq i \leq n+1} $ with $a_i <i$ (encoding the collisional particle in $C_{i-1,i}$), and pseudo-trajectories defined as follows.
  
\begin{e-definition}
[Pseudo-trajectory]
\label{pseudotrajectory}
Given~$z_1 \in {\mathbb T}^2\times {\mathbb R}^2$, $t>0$ and a collision tree~$a $, consider a collection of times, angles and velocities~$(T_{2,n+1}, \Omega_{2,n+1}, V_{2,n+1}) = (t_i, \omega_i, v_i)_{2\leq i\leq n+1}$ with~$0\leq t_{n+1}\leq\dots\leq t_2\leq t$. We then define recursively the pseudo-trajectories of the  backward BBGKY dynamics as follows
\begin{itemize}
\item in between the  collision times~$t_i$ and~$t_{i+1}$   the particles follow the~$i$-particle backward flow on $\cD_i$ (with specular reflection on $\d \cD_i$);
\item at time~$t_i^+$,  particle   $i$ is adjoined to particle $a_i$ at position~$x_{a_i}(t_i^+) + \eps \omega_i$ and 
with velocity~$v_i$, provided~$|x_i-x_j(t_i^+)| > \eps$ for all~$j < i$ with~$ j \neq a_i$.  
If $(v_i - v_{a_i} (t_i^+)) \cdot \omega_i >0$, velocities at time $t_i^-$ are given by the scattering laws
$$\begin{aligned}
v_{a_i}(t^-_i) &= v_{a_i}(t_i^+) - (v_{a_i}(t_i^+)-v_i) \cdot \omega_i \, \omega_i  \, ,\quad 
v_i(t^-_i) &= v_i+ (v_{a_i}(t_i^+)-v_i) \cdot \omega_i \,  \omega_i \, .
\end{aligned}
$$
\end{itemize}
 We denote  by~$z_i(a, T_{2,n+1}, \Omega_{2,n+1}, V_{2,n+1},\tau)$ the position and velocity of the particle labeled~$i$, at time~$\tau\leq  t_i$. The  configuration obtained at the end of the tree, i.e. at time 0, is~$Z_{n+1}(a, T_{2,n+1}, \Omega_{2,n+1}, V_{2,n+1},0)$. The set of admissible parameters (i.e. such that the dynamics has no overlap and the pseudo-trajectory is defined up to time 0) is denoted $G_n(a)$.
\end{e-definition}
We therefore end up with the following representation formula
$$
\begin{aligned}
 \delta \! F^{(1)} _N(t) =\sum_{n=0}^{N-1}   (N-1) \dots\big(N-n\big) \eps^{n} \sum_{a \in \cA_n}   \int_{G_n(a)}& dT_{2,n+1} d\Omega_{2,n+1}    dV_{2,n+1}\Big( \prod_{i=2}^{n+1}  \big( (v_i -v_{a_i} (t_i)) \cdot \omega_i \Big)  
 \\
 &\quad  \times 
\delta \! F_{N,0}^{(n+1)}\big (Z_{n+1}(a, T_{2,n+1}, \Omega_{2,n+1}, V_{2,n+1},0)\big)  \, .
 \end{aligned}
 $$
Note that the factor $(N-1) \dots\big(N-n\big) \eps^{n}$ is of order $\alpha^n$ in the Boltzmann-Grad limit.

\medskip

Lanford's proof can then be decomposed in two steps~:
\begin{itemize}
\item we first compare the pseudo-trajectories of the BBGKY hierarchy with the pseudo-trajectories describing the Boltzmann dynamics. We expect these two families of pseudo-trajectories to differ because of the small shifts at the collision times ($|x_i- x_{a_i}| = \eps$), and because of the possible recollisions due to the fact that ${\bf S}_s$ is the transport with exclusion (in $\cD_s$). Note that they will also have slightly different weights in the representation formula because of the prefactors $(N-s)\eps$ (replaced by $1/\alpha$ in the limit).
\item we then use an argument of dominated convergence to sum over all possible collision trees, and obtain the convergence of the series.
\end{itemize}
We stress the fact that  only the second step imposes a restriction on the time of convergence in the original proof of Lanford. It is indeed based on (loss) continuity estimates for the collision operators in weighted $L^\infty$ spaces  (with exponential decay in $v$)  denoted by $\tilde L^\infty$ 
$$ \Big\| \int_0^t d t_1 {\bf S}_s (t - t_1 ) C_{s,s+1} {\bf S}_{s+1} (t_1) f_{s+1} \Big\|_{\tilde L^\infty} 
 \leq C s  { t\over \alpha}  \| f_{s+1}\|_{\tilde L^\infty} \, ,
$$
 which provide
\begin{equation}
\label{infty-est}
\| Q_{1,n+1} (t) F_{N,0} ^{(n +1)} \|_{\tilde L^\infty}  \leq \Big(C_{\beta,\mu}  { t\over \alpha} \Big)^n \,.  
\end{equation}
Extending the convergence beyond Lanford's time relies on two important ideas~:
\begin{itemize}
\item a pruning procedure to sort out pathological pseudo-trajectories involving too many collisions on small time intervals;
\item   refined continuity estimates for the collision operators which allow to control these bad terms using cumulant expansions.
\end{itemize}
The technical arranging of these arguments is rather complicated but we shall just describe roughly the main ideas.

\subsection{Extending the time of validity of Lanford's series expansion}

The \emph{pruning procedure} is a strategy which was devised in \cite{BGSR1} in order  to control the growth of collision trees.
The idea is to introduce a sampling in time with a (small) parameter $h>0$.
Let~$\{n_k\}_{k \geq 1}$ be a  sequence of integers, typically~$n_k = 2^k$.
We   study the dynamics up to time  $t= \tau/  \alpha$ by splitting the time interval~$[0,t]$  into~$K$ intervals of size~$h$,  and controlling the number of collisions  on each interval. 
In order to discard  trajectories with a large number of collisions in the iterated Duhamel formula, we define collision trees ``of controlled size" by the condition that they have strictly less than $n_k$ branch points on the  interval~$[t-kh ,t-(k-1) h]$.
Note that by construction, the trees are actually  followed ``backwards", from time~$t$ (large)   to time~$0$. So
we decompose the iterated Duhamel formula, by writing
\begin{equation}
\label{series-expansion}
\begin{aligned}
&\delta \!  F_N^{(1)} (t)    : =   \sum_{j_1=0}^{n_1-1}\! \!   \dots \!  \! \sum_{j_K=0}^{n_K-1} Q_{1,J_1} (h )Q_{J_1,J_2} (h )
 \dots  Q_{J_{K-1},J_K} (h )     \delta \!   F^{(J_K)}_{N,0}   \\
&\qquad  +\sum_{k=1}^K \; \sum_{j_1=0}^{n_1-1} \! \! \dots \! \! \sum_{j_{k-1}=0}^{n_{k-1}-1}\sum_{j_k \geq n_k} \; 
 Q_{1,J_1} (h ) \dots  Q_{J_{k-2},J_{k-1}} (h ) \,  Q_{J_{k-1},J_{k}} (h)    \delta \!   F^{(J_{k})}_N(t-kh) 
    \, ,
 \end{aligned}
\end{equation}
with~$J_0:=1$,~$J_k :=1+ j_1 + \dots +j_k$. The first term on the right-hand side corresponds to the smallest trees (having less than $N_k = n_1+\dots +n_K = O(2^{K+1})$ branching points),  and the second term is the remainder:  it represents trees with super exponential branching, i.e. having at least~$n_k$ collisions during the last time lapse, of size~$h$. 
We have  then to 
prove that  the remainder is small, even for large~$t$ (but small~$h$).

In \cite{BGSR1}, we were interested in the motion of one tagged particle in a gas  close to equilibrium, and we had the global uniform estimate
$$ \forall n, \forall t>0, \qquad  \delta \! F^{(n)}_N  (t, Z_n)  \leq C_0 M_N^{(n)} \leq C_0 C^n M^{\otimes n}$$
which results from the $L^\infty$ bound on the initial data, the maximum principle for the Liouville equation and  
 the fact that the Gibbs measure $M_N$  is invariant under the flow. By (\ref{infty-est}), we thus obtained that the remainder is controlled by
 $$
 \begin{aligned}
  \| \sum_{k=1}^K &\; \sum_{j_1=0}^{n_1-1} \! \! \dots \! \! \sum_{j_{k-1}=0}^{n_{k-1}-1}\sum_{j_k \geq n_k} \; 
 Q_{1,J_1} (h ) \dots  Q_{J_{k-2},J_{k-1}} (h ) \,  Q_{J_{k-1},J_{k}} (h)    \delta \!  F^{(J_{k})}_N(t-kh)  \| _{L^\infty} \\
 &\leq  \sum_{k=1}^K \; \sum_{j_1=0}^{n_1-1} \! \! \dots \! \! \sum_{j_{k-1}=0}^{n_{k-1}-1}\sum_{j_k \geq n_k} C_0 \Big(C { t\over \alpha}\Big) ^{N_{k-1}} \Big(C { h\over \alpha}\Big)^{n_k} \\
 & \leq C_0 \sum_{k=1}^K 2^{k^2} \Big(C^2 { th\over \alpha^2}\Big)^{2^k} 
 \end{aligned}
 $$
 which is small provided that  $ ht \ll\alpha^2 $.
However, under the assumptions of Theorem \ref{bgsr-thm},  the initial datum~$\delta \! F_N$ is no longer~$O(1)$ in~$L^\infty$, and we cannot apply the same strategy.

\subsection{Taking advantage of the exchangeability}

The second key argument   is specific to the linearized setting. 
In dimension $d=2$, the partition function 
introduced in \eqref{eq: Gibbs measure} is bounded from below uniformly in~$N$
\begin{equation}
\label{ZN}
 \cZ_N =\int  \indc_{\cD_N} dZ_N \geq c e^{- C/\alpha^2}\,.
 \end{equation}
This means that  the product  measure $M^{\otimes N}$ and the  measure $M_N$ conditioned by the exclusion constraint are comparable in a very strong sense. This fact will be crucial in the decomposition  \eqref{fNs-exp} and in \eqref{L2-bound} in order to replace $M_N$ by $M^{\otimes N}$. 
 
Combining  estimate \eqref{ZN} with the initial $L^2$ bound (which is the counterpart in this setting of the initial entropy control) 
$$ 
\int {\delta \! F_{N,0} ^2 \over M_N } dZ_N 
 \leq   C  \int  M^{\otimes N} 
\left( \sum_{i=1}^N   g_{0}(z_i) \right)^2  dZ_N  
 \leq   CN\| g_{0}\| _{L^2(Mdvdx)}^2    
$$
and   the conservation law for the Liouville equation, we obtain that, for all $t$
\begin{equation}
\label{L2-bound}
\int {\delta \! F_{N} ^2(t) \over M^{\otimes N} }dZ_N 
 \leq   C e^{C/\alpha^2}\ \int {\delta \! F_{N} ^2 (t) \over M_N }dZ_N 
 \leq   Ce^{C/\alpha^2} \int {\delta \! F_{N,0} ^2 \over M_N } dZ_N  = O(Ne^{C/\alpha^2})\, .
\end{equation}
The following cumulant estimates can then be obtained from this a priori bound.
\begin{lem}
\label{lem: exchangeability}
Let~$\delta \! F_N$ be a mean free, symmetric function such that $\delta \! F_N/ M^{\otimes N} \in L^2( M^{\otimes N} dZ_N )$. 
There exist symmetric functions $f_N^m$  on $\cD_m$ for $1\leq m\leq N$  such that
for all $s \leq N$,  the marginal of order~$s$ has the following cumulant expansion~:
\begin{equation}
\label{fNs-exp}
\delta \! F_N^{(s)}( Z_s) = M^{\otimes s} (V_s) \sum_{m=1}^s  \sum _{\sigma\in {\mathfrak S}^m_s } f_N^{{m}} (Z_\sigma)\,,
\end{equation}
where ${\mathfrak S}^m_s$ denotes the set of all parts of $\{1,\dots ,s\}$ with $m$ elements, and $ \binom{s}{m}  $  is its cardinal. Moreover we have the following decay estimate on the cumulants~:
\begin{equation}
\label{cumulant-est}
\sum _{m=1}^N  \binom{N}{m}\| f_N^{m}  \|^2_{ L^2 (M^{\otimes m} dZ_m)} 
\leq 
  \|\delta \! F_N/M^{\otimes N}\|^2_{L^2 (M^{\otimes N} dZ_N)}  = O(N e^{C/\alpha^2})\,.
\end{equation}
\end{lem}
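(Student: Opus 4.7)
\noindent\emph{Proof proposal.}
The plan is to recognize the $f_N^m$ as the Hoeffding (ANOVA) components of $\delta \! F_N/M^{\otimes N}$ regarded as an $L^2$ function on the product probability space $\big((\T^2\times\R^2)^N,\mu^{\otimes N}\big)$ with $d\mu(x,v):= M(v)\,dxdv$. Under this identification \eqref{fNs-exp} is the standard reconstruction of a symmetric function from its ANOVA pieces, and \eqref{cumulant-est} is the orthogonal Parseval identity for that decomposition; the bound \eqref{L2-bound} already obtained above supplies the quantitative constant.

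First I would set $\phi_N^{(k)}(Z_k) := \delta \! F_N^{(k)}(Z_k)/M^{\otimes k}(V_k)$ for $1\leq k \leq N$ and $\phi_N^{(0)} := 0$ (the latter being the mean-freeness assumption), and then define by symmetric M\"obius inversion
\[
f_N^m(Z_m) \; := \; \sum_{k=1}^m (-1)^{m-k}\!\!\sum_{\tau\subset\{1,\dots,m\},\ |\tau|=k}\!\! \phi_N^{(k)}(Z_\tau).
\]
The expansion \eqref{fNs-exp} follows by inclusion--exclusion and symmetry of the marginals. The nontrivial structural point is the \emph{degeneracy property} $\int f_N^m(Z_m)\,d\mu(z_i) = 0$ for every $i \in \{1,\dots,m\}$, which I would verify by telescoping in the alternating sum, using $\int \phi_N^{(k)}(Z_\tau) M(v_i)\,dx_i dv_i = \phi_N^{(k-1)}(Z_{\tau\setminus\{i\}})$ when $i\in\tau$, together with $\phi_N^{(0)}=0$.

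Second, the degeneracy implies that for any two distinct subsets $\sigma,\sigma' \subset \{1,\dots,N\}$ the functions $f_N^{|\sigma|}(Z_\sigma)$ and $f_N^{|\sigma'|}(Z_{\sigma'})$ are orthogonal in $L^2(\mu^{\otimes N})$: integrating against $d\mu(z_i)$ for any coordinate $i$ lying in only one of the two subsets kills the cross product. Applying \eqref{fNs-exp} at $s=N$ and invoking this pairwise orthogonality (Pythagoras) yields
\[
\Big\| \frac{\delta \! F_N}{M^{\otimes N}} \Big\|^2_{L^2(M^{\otimes N} dZ_N)} \; = \; \sum_{m=1}^N \binom{N}{m}\, \| f_N^m \|^2_{L^2(M^{\otimes m} dZ_m)},
\]
in fact an equality rather than merely an inequality. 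Combining with \eqref{L2-bound} gives the announced $O(Ne^{C/\alpha^2})$ bound.

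The main technical care will be in the degeneracy computation, since the marginals $\delta \! F_N^{(k)}$ inherit the exclusion constraint through the indicator $\indc_{\cD_N}$ hidden in $\delta \! F_N$; one must check that the ratios $\phi_N^{(k)}$ behave correctly under partial integration against the smooth Gaussian weight $M(v_i)$, which they do because the integrated variable $z_i=(x_i,v_i)$ enters only through $\delta \! F_N^{(k)}$ and $M^{\otimes k}$. Once this bookkeeping is in place, the argument is pure algebra and uses no dynamical information about the hard-sphere flow beyond mean-freeness and the global $L^2$ bound~\eqref{L2-bound}.
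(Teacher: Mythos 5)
Your proposal is correct and follows essentially the argument behind the lemma (which is proved in \cite{BGSR2}, not in this note): an orthogonal Hoeffding-type cumulant decomposition of $\delta \! F_N/M^{\otimes N}$ obtained by M\"obius inversion of the marginals, the degeneracy property $\int f_N^m \, d\mu(z_i)=0$ giving pairwise orthogonality, and Parseval combined with the a priori bound \eqref{L2-bound}. The only caveat is your claim that \eqref{cumulant-est} is an equality: Parseval holds over the full product space $(\mathbb{T}^d\times\mathbb{R}^d)^N$, but since the $f_N^m$ are not supported in $\cD_m$ (only the marginals are), restricting the norms $\| f_N^{m}\|_{L^2(M^{\otimes m}dZ_m)}$ to $\cD_m$ turns it into the stated inequality --- this does not affect the validity of your argument.
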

The upper bound \eqref{cumulant-est} shows that the correlations decrease in~$L^2$-norm according to the number of particles. 
In particular, this 
 implies that  the leading order term, in $L^2$, for the marginals \eqref{fNs-exp} 
comes from the first cumulant
\begin{equation}
\label{fNs-exp 2}
\delta \! F_N^{(s)}( Z_s) = M^{\otimes s} (V_s) \;
\left[ \sum_{m=1}^s   f_N^{{1}} (z_m) \right] + O \left(  \frac{1}{\sqrt{N}} \right)_{L^2} .
\end{equation}
This very weak  chaos property can be interpreted as a step towards proving local equilibrium.


\begin{rmk}
The approximation \eqref{fNs-exp 2} is not strong enough to deduce directly that the equation on the first marginal {\rm(\ref{BBGKY1})} can be closed, i.e. to replace $F_N^{(2)}$ by $(F_N^{(1)})^{\otimes 2}$,  because  the collision operator $C_{1,2}$ is too   singular (see \cite{denlinger, BGSRS} for a discussion on this point).  

Note in addition that the argument cannot be applied directly in dimension greater than 2, since  the Gibbs measure $M_N$ is very different from the tensor product $M^{\otimes N}$ (the partition function is no longer uniformly bounded from below as in \eqref{ZN}).
\end{rmk}

 The difficulty is then to use these $L^2$ a priori bounds to obtain a control on the remainders in the BBGKY series expansion (\ref{series-expansion}). The operator  $C_{s,s+1}$ is indeed  ill-defined in $L^2$ (as it acts on hypersurfaces) and it has to be combined with the transport operator to recover the missing dimension (see \cite{GSRT}~Section~5).
One can show that there exists weighted~$L^2$-norms denoted by~$\tilde L^2$ such that 
\begin{eqnarray}
\label{L2-cont}
\left\| \int_0^t d\tau C_{s,s+1} {\bf S}_{s+1}^0 (-\tau ) h_{s+1} \right\|_{\tilde L^2} 
\leq C  \sqrt{ \frac{s \, t}{ \eps} } \,  \| h_{s+1}\|_{\tilde L^2} \, .
\end{eqnarray} 

\medskip

The constants in the $L^2$-estimate \eqref{L2-cont} are much worse than in the  $L^\infty$-estimate
\eqref{infty-est} as each collision operator induces a loss of order $\eps^{-1/2}$.
Nevertheless estimate (\ref{cumulant-est}) implies that higher order cumulants decay as 
 $f_N^m = O(\eps^{(m-1)/2}) _{L^2}$.
Thus combining (\ref{cumulant-est}) and the singular continuity estimate~(\ref{L2-cont}), we see that the powers of $\eps$ exactly compensate; leading to  a control of the remainder in (\ref{series-expansion}) of the type
\begin{align*}
 \| \sum_{k=1}^K &\; \sum_{j_1=0}^{n_1-1} \! \! \dots \! \! \sum_{j_{k-1}=0}^{n_{k-1}-1}\sum_{j_k \geq n_k} \; 
 Q_{1,J_1} (h ) \dots  Q_{J_{k-2},J_{k-1}} (h ) \,  Q_{J_{k-1},J_{k}} (h)    \delta \!  F^{(J_{k})}_N(t-kh)  \| _{L^2} \\
 &\leq  e^{C/\alpha^2} \sum_{k=1}^K \; \sum_{j_1=0}^{n_1-1} \! \! \dots \! \! \sum_{j_{k-1}=0}^{n_{k-1}-1}\sum_{j_k \geq n_k} C_0 \Big(C { t\over \alpha}\Big) ^{3N_{k-1}/2} \Big(C { h\over \alpha}\Big)^{n_k/2 } \\
 & \leq C_0 e^{C/\alpha^2} \sum_{k=1}^K 2^{k^2} \Big(C^2 { t^{3/2} h^{1/2} \over \alpha^2}\Big)^{2^k} \, .
\end{align*} 
 This can be made as small as needed by choosing $t^{3/2} h^{1/2} \ll\alpha^2$.

In fact, the previous estimates only holds if the total number of recollisions is controlled.
It remains then to control super exponential trees with many recollisions. We shall skip here this part of the proof which requires rather tough geometric estimates on multiple recollisions.


\section{Concluding remarks}

The results presented in this note provide very preliminary answers to the problem of deriving the Fourier law from microscopic mechanical systems.
We  discuss here   the possibility of extending these results to more physically relevant situations, and the blocking points.

The first issue is that, in the case of the hard-sphere dynamics,  boundary conditions introduced Section~\ref{sec: hard sphere gas} have not  yet been taken into account. We however expect that Theorem \ref{bgsr-thm} and Corollary \ref{stokes-cor} can be extended to thermostated systems very close to  equilibrium.

Then the main restriction we have imposed here is a (very stringent) \emph{smallness condition on the  size of the non-equilibrium perturbation}. This condition takes different forms in the case of a perfect gas and a hard-sphere gas,
but it allows to overcome two difficulties :
\begin{itemize}
\item close to equilibrium, the \emph{collision process} is very well controlled : it is both efficient (inducing a relaxation towards thermodynamic equilibrium in a very homogeneous way) and non singular (no correlation structure violating the chaos assumption will appear);
\item close to equilibrium, the \emph{hypoellipticity of the transport} is well understood~: we then expect the thermodynamic fields to be rather smooth and thus to have good homogenization properties.
\end{itemize}

A natural question is to know whether the smallness condition is purely technical or whether the physics beyond this threshold is expected to be more complicated. 
For the particle system, the scaling condition~$f_0 = M(1+\frac1N g_0)$ can be related to the fluctuation field which is controlled  by the central limit theorem. 
 At this scale, the Fourier law describes a dissipation mechanism associated with the mixing property of the time correlations of the fluctuation field (at equilibrium).
Looking at the formal asymptotics, it is not clear that there is any obstruction (other than technical) to get the Fourier law in more general situations, at least in incompressible regimes.




\bibliographystyle{plain}
\bibliography{biblio.bib}

\end{document}